\newtheorem{theorem}{Theorem}[section]
\newtheorem{lemma}[theorem]{Lemma}
\newtheorem{definition}[theorem]{Definition}
\newtheorem{proposition}[theorem]{Proposition}
\newtheorem{claim}[theorem]{Claim}
\newtheorem{example}[theorem]{Example}
\newtheorem*{example*}{Example}
\newtheorem{observation}[theorem]{Observation}
\newtheoremstyle{myexample}{3pt}{3pt}{\rmfamily}{}{\itshape}{:}{ }{\thmname{#1}\thmnumber{ #2}\thmnote{ (#3)}}
\theoremstyle{myexample}
\newtheoremstyle{myremark}{3pt}{3pt}{\rmfamily}{}{\itshape}{:}{ }{\thmname{#1}}
\theoremstyle{myremark}
\newtheorem{remark}[theorem]{Remark}
\newtheorem*{observation*}{Observation}
\newtheoremstyle{conjecture}{3pt}{3pt}{\itshape}{}{\bfseries}{.}{ }{\thmname{#1}\thmnote{ (#3)}}
\theoremstyle{conjecture}
\newtheorem*{question*}{Question}
\newtheorem{theorem*}{Theorem}
\numberwithin{equation}{section}
\newcounter{algorithm}
\renewcommand{\thealgorithm}{\thesection.\arabic{algorithm}}
\def\dom{\mathop{\mathrm{Dom}}\nolimits}
\def\mus{\mathop{\mathrm{mus}}\nolimits}
\def\str#1{\mathbf {#1}}
\def\arity#1{a(\rel{}{#1})}
\def\nbrel#1#2{R\ifstrempty{#1}{}{_{#1}}\ifstrempty{#2}{}{^{#2}}}
\def\rel#1#2{\nbrel{\ifstrempty{#1}{}{\str{#1}}}{#2}}
\def\func#1#2{\nbfunc{\ifstrempty{#1}{}{\str{#1}}}{#2}}
\def\nbfunc#1#2{F\ifstrempty{#1}{}{_{#1}}\ifstrempty{#2}{}{^{#2}}}
\def\K{{\mathcal K}}
\def\Fraisse{Fra\"{\i}ss\' e}
\def\Monoid{{\mathfrak M}}
\def\EMonoid{\Monoid=(M,\oplus,\mleq,0)}
\def\Block{{\mathcal B}}
\def\mleq{\preceq}
\def\mgeq{\succeq}
\def\mgt{\succ}
\def\mlt{\prec}
\def\Str{\mathop{\mathrm{Str}}\nolimits}
\def\Ind#1#2{#1\setbox0=\hbox{$#1x$}\kern\wd0\hbox to 0pt{\hss$#1\mid$\hss}
\lower.9\ht0\hbox to 0pt{\hss$#1\smile$\hss}\kern\wd0}
\def\Notind#1#2{#1\setbox0=\hbox{$#1x$}\kern\wd0\hbox to
0pt{\mathchardef\nn="0236\hss$#1\nn$\kern1.4\wd0\hss}\hbox 
to 0pt{\hss$#1\mid$\hss}\lower.9\ht0
\hbox to 0pt{\hss$#1\smile$\hss}\kern\wd0}
\begin{document}

\title{Conant's generalised metric spaces are Ramsey}

\author[J. Hubi\v cka]{Jan Hubi\v cka}
\address{Charles University, Faculty of Mathematics and Physics\\Department of Applied Mathematics (KAM)\\
Prague, Czech Republic}
\email{hubicka@kam.mff.cuni.cz}
\author[M. Kone\v cn\'y]{Mat\v ej Kone\v cn\'y}
\address{Charles University, Faculty of Mathematics and Physics\\Department of Applied Mathematics (KAM)\\
Prague, Czech Republic}
\email{matej@kam.mff.cuni.cz}
\author[J. Ne\v set\v ril]{Jaroslav Ne\v set\v ril}
\address{Charles University, Faculty of Mathematics and Physics\\
Computer Science Institute of Charles University (IUUK)\\
Prague, Czech Republic}
\email{nesetril@iuuk.mff.cuni.cz}
\subjclass[2000]{Primary: 05D10, 20B27, 54E35, Secondary: 03C15, 22F50, 37B05}
\keywords{Ramsey class, metric space, homogeneous structure, generalized metric space}
\thanks {J. H. and M. K. are supported  by  project  18-13685Y  of  the  Czech  Science Foundation (GA\v CR) and by Charles University project Progres Q48.}.\\

\begin{abstract}
We give Ramsey expansions of classes of generalised metric spaces where distances come from a linearly ordered commutative monoid. This complements
results of Conant about the extension property for partial automorphisms and extends 
an earlier result of the first and the last author giving the Ramsey property of convexly ordered $S$-metric spaces.
Unlike Conant's approach, our analysis does not require the monoid to be semi-archimedean.
\end{abstract}
\maketitle
\centerline{Dedicated to old friend Norbert Sauer.}

\section{Introduction}
Given $S\subseteq \mathbb R_{>0}$ (a subset of positive reals), an \emph{$S$-metric space}
is a metric space with all distances contained in $S\cup\{0\}$. 
The following 4-values condition characterises when the class of all finite $S$-metric spaces is closed
for amalgamation (see Section~\ref{sec:background} for definition of amalgamation):
\begin{definition}[Delhomm{\'e}, Laflamme, Pouzet, Sauer \cite{Delhomme2007}]
A subset $S\subseteq \mathbb R_{>0}$ satisfies the \emph{4-values condition}, if
for every $a,b,c,d\in S$, if there is some $x\in S$ such that the triangles with distances $a$--$b$--$x$ and $c$--$d$--$x$ satisfy
the triangle inequality, then there is also $y\in S$ such that the triangles with distances $a$--$c$--$y$ and
$b$--$d$--$y$ satisfy the triangle inequality.
\end{definition}
\begin{figure}
\centering
\includegraphics{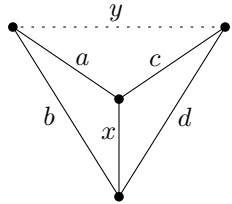}
\caption{The 4-values condition.}
\label{fig:4values}
\end{figure}

The 4-values condition means that the amalgamation of triangles exists (see Figure~\ref{fig:4values}). It can be equivalently described by means of the following operation:
\begin{definition}\label{oplusS}
Given a subset $S$ of positive reals and $a,b\in S$, denote by $a\oplus_S b=\sup\{x\in S; x\leq a+b\}$.
\end{definition}
\begin{theorem}[Sauer~\cite{Sauer2013b}]
\label{thm:sauer2}
A (topologically) closed subset $S$ of the positive reals satisfies the 4-values condition if and only if the operation $\oplus_S$
is associative.
\end{theorem}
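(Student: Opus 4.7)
The equivalence is shown in two directions. Throughout I use that, since $S$ is closed in the positive reals, the supremum defining $a \oplus_S b$ is attained in $S$ whenever the underlying set is nonempty --- which it is, because $a, b \in S$ and $a, b \leq a+b$. I also use that $\oplus_S$ is commutative and monotone in each argument ($e \leq e'$ implies $a \oplus_S e \leq a \oplus_S e'$, directly from the definition as a supremum).

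For the forward implication (4-values $\Rightarrow$ associativity), I fix $a, b, c \in S$, set $u := a \oplus_S b$ and $s_1 := u \oplus_S c = (a \oplus_S b) \oplus_S c$. The bounds $u \leq a+b$ and $u \geq \max\{a,b\}$ make $(a, b, u)$ a triangle in the usual sense, and similarly $(s_1, c, u)$ is a triangle. I then apply the 4-values condition to the quadruple $(a, b, s_1, c)$ with shared side $u$ to obtain $y \in S$ giving triangles $(a, s_1, y)$ and $(b, c, y)$. From the second, $y \leq b + c$ yields $y \leq b \oplus_S c$; from the first, $y \geq s_1 - a$ yields $a + y \geq s_1$. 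Hence $s_1 \in S$ lies below $a + (b \oplus_S c)$, so $s_1 \leq a \oplus_S (b \oplus_S c) =: s_2$. Running the same argument under the relabelling $(a,b,c) \mapsto (c,b,a)$ and using commutativity of $\oplus_S$ delivers $s_2 \leq s_1$, hence equality.

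For the reverse implication (associativity $\Rightarrow$ 4-values), given $a, b, c, d, x \in S$ as in the hypothesis, I claim that at least one of $y_1 := a \oplus_S c$ or $y_2 := b \oplus_S d$ works. Each satisfies its ``own'' pair of constraints trivially: $y_1 \leq a + c$ and $y_1 \geq \max\{a, c\} \geq |a - c|$, and likewise for $y_2$. The crux is the \emph{cross} lower bound $y_1 \geq |b - d|$, obtained as follows: from the triangle $(a, b, x)$ we have $b \leq a + x$, and since $b, x \in S$ this gives $b \leq a \oplus_S x$; from $x \leq c + d$ and $x \in S$ we get $x \leq c \oplus_S d$; monotonicity then yields $a \oplus_S x \leq a \oplus_S (c \oplus_S d)$, and associativity rewrites this last expression as $(a \oplus_S c) \oplus_S d \leq (a \oplus_S c) + d$. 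Hence $b \leq (a \oplus_S c) + d$, i.e.\ $y_1 \geq b - d$; the dual (swap $b \leftrightarrow d$, $a \leftrightarrow c$, use $x \leq a+b$ and $d \leq c+x$) gives $y_1 \geq d - b$, and symmetrically $y_2 \geq |a-c|$. The remaining cross upper bound is a short case distinction: if both $y_1 > b+d$ and $y_2 > a+c$ held, we would have $b+d < y_1 \leq a+c < y_2 \leq b+d$, impossible. Therefore one of $y_1, y_2$ satisfies all four inequalities and serves as the required $y$.

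\textbf{Main obstacle.} The pivotal step is in the forward direction: the invocation of 4-values on the quadruple $(a, b, s_1, c)$ with shared value $u = a \oplus_S b$, where one coordinate is itself the three-fold $\oplus_S$ being analysed, is the only non-obvious move. The reverse direction is less subtle once the two candidates $y_1, y_2$ are identified; the temptation to hunt for a single canonical $y$ such as $\min(y_1, y_2)$ is worth resisting, as associativity only controls the two candidates individually.
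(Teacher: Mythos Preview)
The paper does not actually prove this theorem: it is stated as a result of Sauer~\cite{Sauer2013b} and used as input to Theorem~\ref{thm:Smetric}, so there is no in-paper argument to compare against. Your proof is correct and self-contained. The forward direction hinges exactly on the right trick---feeding the already-formed value $s_1=(a\oplus_S b)\oplus_S c$ back into the 4-values hypothesis as one of the four parameters, with $u=a\oplus_S b$ in the role of $x$---and your use of closedness of $S$ to guarantee $u,s_1\in S$ (so that the 4-values hypothesis applies) is the one place where the topological assumption is genuinely needed. The reverse direction via the two candidates $y_1=a\oplus_S c$, $y_2=b\oplus_S d$ and the short contradiction for the cross upper bound is the standard and cleanest route; your chain $b\le a\oplus_S x\le a\oplus_S(c\oplus_S d)=(a\oplus_S c)\oplus_S d\le y_1+d$ is exactly how associativity enters. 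This matches the argument in Sauer's original paper.
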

Sauer~\cite{Sauer2013} used the equivalence above to determine those $S$ such that there exists an (ultra)homogeneous $S$-metric space (i.e.~the $S$-Urysohn metric space).
In \cite{Hubicka2016} the first and third author proved:
\begin{theorem}[Hubi\v cka, Ne\v set\v ril~\cite{Hubicka2016}]
\label{thm:Smetric}
Given a closed set $S$ of positive reals the following four statements are equivalent:
\begin{enumerate}
 \item The class of all finite $S$-metric spaces is an amalgamation class.
 \item $S$ satisfies 4-values condition.
 \item $\oplus_S$ is associative.
 \item The class of all finite $S$-metric spaces has a precompact Ramsey expansion.
\end{enumerate}
\end{theorem}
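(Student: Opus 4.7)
The equivalence (2)~$\Leftrightarrow$~(3) is Theorem~\ref{thm:sauer2}, which I would invoke directly. The plan is then to close the cycle by establishing (1)~$\Rightarrow$~(2), (3)~$\Rightarrow$~(1), (1)~$\Rightarrow$~(4), and (4)~$\Rightarrow$~(1).

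The implication (1)~$\Rightarrow$~(2) is essentially tautological: the 4-values condition is the assertion that two triangles sharing an edge admit an $S$-metric amalgam, and triangles are themselves finite $S$-metric spaces. For (3)~$\Rightarrow$~(1) I would use the shortest-path completion. Given finite $S$-metric spaces $\mathbf{B}$ and $\mathbf{C}$ meeting in a common substructure $\mathbf{A}$, form the set-theoretic amalgam $B \cup_A C$ and, for $b \in B \setminus A$ and $c \in C \setminus A$, set
\[
d(b,c) = \min\bigl\{\, d(b,a) \oplus_S d(a,c) : a \in A \,\bigr\}.
\]
Associativity of $\oplus_S$ is exactly what makes iterating this computation along longer paths give the same value, and hence guarantees that $d$ satisfies the triangle inequality on $B \cup_A C$; symmetry is automatic from commutativity of $\oplus_S$. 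The fact that $d$ takes values in $S$ follows because $\oplus_S$ is $S$-valued, so the result is an $S$-metric space.

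The main content is (1)~$\Rightarrow$~(4). My plan is to expand each finite $S$-metric space by a linear order that is \emph{convex with respect to the metric}, meaning that every metric ball forms an interval in the order. I would verify that the ordered class is a hereditary strong amalgamation class by refining the shortest-path amalgam above to respect compatible convex orders on $\mathbf{B}$ and $\mathbf{C}$ (placing new points from $B \setminus A$ and $C \setminus A$ consistently with the ordering inherited from $\mathbf{A}$). The Ramsey property is then derived by invoking a general structural Ramsey theorem for ordered relational structures defined by forbidden finite substructures (the forbidden triangles violating the $S$-metric condition), ideally via a reduction to the Ne\v{s}et\v{r}il--R\"odl class of ordered edge-coloured hypergraphs. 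The convexity of the order is what turns the shortest-path amalgam into a genuine strong amalgamation in the expanded class; without it, distinct convex orders on amalgamands need not combine into any single convex order on the amalgam.

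The implication (4)~$\Rightarrow$~(1) is a general fact: a hereditary class with the joint embedding property that admits a precompact expansion to a Ramsey class is automatically an amalgamation class. One amalgamates by passing to arbitrary expansions of the two amalgamands, applying the Ramsey property to a sufficiently large common target in the expanded class, and extracting a monochromatic witness whose reduct contains the desired amalgam. The main obstacle throughout is (1)~$\Rightarrow$~(4); the equivalences (1)~$\Leftrightarrow$~(2)~$\Leftrightarrow$~(3) follow the well-understood route through the shortest-path construction, and (4)~$\Rightarrow$~(1) is formal, but Ramsey-ness genuinely requires either a direct partite construction or a careful reduction, together with the correct choice of convex ordering to ensure strong amalgamation in the expansion.
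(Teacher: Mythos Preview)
First, note that Theorem~\ref{thm:Smetric} is quoted from~\cite{Hubicka2016} and is not proved in the present paper; the paper instead proves the generalisation Theorem~\ref{thm:main}, whose methods specialise to give (1)$\Leftrightarrow$(4). Your treatment of (1)$\Leftrightarrow$(2)$\Leftrightarrow$(3) and of (4)$\Rightarrow$(1) is correct and standard, and you have correctly identified the convex ordering as the right expansion.

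There is, however, a genuine gap in your plan for (1)$\Rightarrow$(4). You propose to obtain the Ramsey property by invoking a structural Ramsey theorem for classes described by ``the forbidden triangles violating the $S$-metric condition''. But the obstructions to completing an $S$-edge-labelled graph to an $S$-metric space are not triangles: by Proposition~\ref{prop:cycles} they are non-$\Monoid_S$-metric \emph{cycles} of arbitrary length, and for many closed $S$ there are infinitely many of them. For example, with $S=\{1,3,5\}$ one has $1\oplus_S 1=1$, so every cycle with one edge of length $5$ and the remaining edges of length $1$ is non-$\Monoid_S$-metric, regardless of its size. No direct appeal to a Ne\v{s}et\v{r}il--R\"odl-type theorem for classes defined by finitely many forbidden substructures applies.

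The route actually taken, both in~\cite{Hubicka2016} and in Sections~\ref{sec:Mmetricarch}--\ref{sec:Metric} here, goes through the multiamalgamation framework (Theorem~\ref{thm:mainstrongclosures}): one must verify the \emph{locally finite completion property}, i.e.\ that for each fixed finite $\str{C}_0$ the relevant obstructions have bounded size. When $\Monoid_S$ happens to be archimedean this is immediate (Lemma~\ref{lem:nonSmetric}); when it is not, as for $S=\{1,3,5\}$, the block decomposition and the $\mus(\Block,S)$ analysis of Section~\ref{sec:Metric} are genuinely required. In that case the convex ordering is not merely a convenient choice to make amalgamation work: it is forced, because it encodes the ball equivalences $\sim_\Block$ whose explicit representation (via the imaginaries $\func{}{\Block}$ of Definition~\ref{defn:lstar}) is what replaces the infinite family of long non-metric cycles by a locally finite family of conditions.
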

This generalises the earlier work on Ramsey property for metric spaces~\cite{Nevsetvril2007,Dellamonica2012,The2010}.
In this paper we further develop this line of research and show similar results in the context of
generalised metric spaces where distance
sets form a monoid as defined by Conant~\cite{Conant2015} and show that the class
of all such generalised metric spaces is Ramsey when enriched by a convex linear ordering (see e.g.~\cite{The2010}).

Let us recall the key notion of a Ramsey class first.
For structures $\str{A},\str{B}$ denote by ${\str{B}\choose \str{A}}$ the set of all substructures of $\str{B}$, which are isomorphic to $\str{A}$ (see Section~\ref{sec:background} for definition of structure and substructure).  Using this notation the definition of a Ramsey class gets the following form:
\begin{definition}
A class $\mathcal C$ is a \emph{Ramsey class} if for every two objects $\str{A}$ and $\str{B}$ in $\mathcal C$ and for every positive integer $k$ there exists a structure $\str{C}$ in $\mathcal C$ such that the following holds: For every partition ${\str{C}\choose \str{A}}$ into $k$ classes there exists an $\widetilde{\str B} \in {\str{C}\choose \str{B}}$ such that ${\widetilde{\str{B}}\choose \str{A}}$ belongs to one class of the partition. 
\end{definition}
 It is usual to shorten the last part of the definition to $\str{C} \longrightarrow (\str{B})^{\str{A}}_k$.

The following is the main result of this paper which complements results of
Conant~\cite{Conant2015} about the extension property for partial automorphisms
(EPPA). Conant generalized results of Solecki~\cite{solecki2005} and
Vershik~\cite{vershik2008} about metric spaces.  Our work extends earlier
Theorem~\ref{thm:Smetric} in a similar manner.

\begin{theorem}\label{thm:main}
For every distance monoid $\EMonoid$ the class $\vv{\mathcal M}_\Monoid$ of all convexly ordered finite $\Monoid$-metric spaces is Ramsey.
\end{theorem}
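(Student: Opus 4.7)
The plan is to deduce Theorem~\ref{thm:main} from the Ramsey-from-amalgamation framework of Hubi\v cka and Ne\v set\v ril already used in the proof of Theorem~\ref{thm:Smetric}. I would encode each finite $\Monoid$-metric space as a relational structure in a language with one binary symbol $R_s$ for every $s\in M$ together with one further binary symbol for the convex linear order. In this language $\vv{\mathcal M}_\Monoid$ becomes a class of finite ordered structures defined by requiring that exactly one $R_s$ hold on each ordered pair, that the generalised triangle inequality $d(x,z)\mleq d(x,y)\oplus d(y,z)$ hold on every triple, and that the order be convex with respect to $d$.

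The first substantive step is strong amalgamation of $\vv{\mathcal M}_\Monoid$. Given $\str{A}\subseteq\str{B}_1,\str{B}_2$ in the class and $b_i\in B_i\setminus A$, one would define
\[
d(b_1,b_2)\;=\;\min_{a\in A}\bigl(d(b_1,a)\oplus d(a,b_2)\bigr),
\]
where the minimum is taken in $(M,\mleq)$, and extend the convex linear order by interleaving the orders of $\str{B}_1$ and $\str{B}_2$ consistently over $\str{A}$. Associativity and monotonicity of $\oplus$, built into the distance-monoid axioms and by Theorem~\ref{thm:sauer2} equivalent to the (generalised) 4-values condition, guarantee that every resulting triangle satisfies the generalised triangle inequality. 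Convexity of the combined order reduces to the same bookkeeping already used in the Euclidean case of~\cite{Hubicka2016}.

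With amalgamation in hand, the Ramsey property reduces to exhibiting a \emph{completion}: any finite structure in the language whose two-element substructures lie in $\vv{\mathcal M}_\Monoid$ and which contains no forbidden triangle extends to a full member of $\vv{\mathcal M}_\Monoid$. The natural ``shortest-path'' completion assigns to a pair $(x,y)$ the minimum in $(M,\mleq)$ of $d(x_0,x_1)\oplus\cdots\oplus d(x_{k-1},x_k)$ over all sequences $x=x_0,x_1,\ldots,x_k=y$ inside the structure. The main obstacle, and precisely the reason Conant's semi-archimedean hypothesis can be dropped here, is to show that this minimum is attained in $M$ and defines a valid $\Monoid$-metric: unlike for EPPA, no global finite witness inside $M$ is demanded, so the argument can rely purely on associativity of $\oplus$ and on the finiteness of the vertex set, without any archimedean control on the monoid itself. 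Once such a completion is established, the Ne\v set\v ril--R\"odl partite construction, carried out inside the known Ramsey class of all convexly ordered finite relational structures in our language, produces a structure $\str{C}$ in $\vv{\mathcal M}_\Monoid$ with $\str{C}\longrightarrow(\str{B})^{\str{A}}_k$ and completes the proof.
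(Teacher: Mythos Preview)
Your outline misses the central difficulty and would not go through as written. The Hubi\v cka--Ne\v set\v ril framework (Theorem~\ref{thm:mainstrongclosures}) does not merely ask that every partial structure with no forbidden triangle admit a shortest-path completion; it demands the \emph{locally finite completion property}: there must exist $n=n(\str{B},\str{C}_0)$ such that if every substructure of $\str{C}$ on at most $n$ vertices has a completion in the class, then so does $\str{C}$. Equivalently, the obstructions to completion must have uniformly bounded size once the distance set is fixed. For an archimedean monoid this is immediate (Lemma~\ref{lem:nonSmetric}): non-$\Monoid$-metric cycles over a fixed finite $S$ have bounded length. For a general distance monoid it is simply false in the relational encoding you propose. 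Take $S=\{1,3\}$ with the truncated addition $\oplus_S$; then a cycle with one edge of length $3$ and $k$ edges of length $1$ is non-$\Monoid$-metric for every $k\ge 2$, so there is no finite $n$ witnessing local finiteness. Your sentence ``the argument can rely purely on associativity of $\oplus$ and on the finiteness of the vertex set'' is exactly where the proof breaks: finiteness of $C$ tells you the shortest-path minimum is attained, but the framework needs a bound independent of $|C|$.

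The paper's remedy is to change the language before applying the framework. One first identifies the definable equivalences $\sim_\Block$ (balls of diameter $\Block$, one for each archimedean block $\Block$ of $\Monoid$) and expands to a language $L^\star_\Monoid$ with unary function symbols naming the $\sim_\Block$-classes and binary relations recording the block-types between balls. In this expanded class $\mathcal M^\star_\Monoid$ the unbounded family of forbidden cycles above collapses: two original vertices joined by a long path of short edges are forced by the functions to share a ball vertex, so the obstruction is already visible on a bounded substructure. The technical heart is Lemma~\ref{lem:obstaclemus} and Proposition~\ref{prop:Sequivalence}, which produce, for each finite $S$, a bound on the number of ``important'' summands in any short path, yielding local finiteness for $\mathcal M^\star_\Monoid$. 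One then transfers Ramsey back to $\vv{\mathcal M}_\Monoid$ via the functor $L^\star$, and handles monoids with infinitely many blocks by restricting to the finitely generated submonoid $\langle S\rangle$ determined by $\str{B}$. None of this machinery---blocks, ball vertices, the $\mus(\Block,S)$ analysis---appears in your proposal, and without it the locally finite completion property cannot be verified.
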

To define a distance monoid we first recall a standard definition. The definition of convex order will be given in Definition~\ref{defn:Js}.

A {\em commutative monoid} is a triple $(M,\oplus,0)$ where $M$ is a set containing $0$, $\oplus$ is an associative and commutative binary operation on $M$ and $0$ is the identity element of $\oplus$.

The following definition of a distance monoid $\EMonoid$ was given by Conant~\cite{Conant2015}:
\begin{definition}\label{def:DM}
A structure $\EMonoid$ is a \emph{distance monoid} if
\begin{enumerate}
\item $(M,\oplus,0)$ is a commutative monoid with identity $0$;
\item $(M, \mleq, 0)$ is a linear order with least element $0$;
\item For all $a,b,c,d\in M$ it holds that if $a\mleq c$ and $b\mleq d$, then $a\oplus b \mleq c\oplus d$ ($\oplus$ is monotonous in $\mleq$).
\end{enumerate}
\end{definition}
In other words, a distance monoid is a (linearly) positively ordered monoid, see~\cite{wehrung1992}.
For every distance monoid $\Monoid$, one can define a $\Monoid$-metric space.
\begin{definition}\label{def:Mmetric}
Suppose $\EMonoid$ is a distance monoid. Given a set $A$ and a function $d:A\times A\to M$, we call $(A,d)$ an \emph{$\Monoid$-metric space} if
\begin{enumerate}
\item for all $a,b\in A$, $d(a,b)=0$ if and only if $a=b$;
\item for all $a,b\in A$, $d(a,b)=d(b,a)$;
\item for all $a,b,c\in A$, $d(a,c)\mleq d(a,b)\oplus d(b,c)$.
\end{enumerate}
Given a distance monoid $\Monoid$, we let $\mathcal M_\Monoid$ denote the class of finite $\Monoid$-metric spaces.
\end{definition}

\begin{example}
The following are distance monoids:
\begin{enumerate}\label{example1}
\item\label{ex:Smetric} Given a set $S$ of non-negative reals containing 0 and closed under $\oplus_S$ the structure $\Monoid_S=(S,\oplus_S,\leq,0)$ (recall Definition~\ref{oplusS}), where the order $\leq$ is the linear order of reals, forms a distance monoid if and only if operation $\oplus_S$ is associative and thus $S$ satisfies the 4-values condition.
\item\label{infini} Consider the set of non-negative real numbers extended by infinitesimal elements, i.e.~$R^* = \left\{a + b\cdot \mathrm{dx}; a,b\in\mathbb R^+_0\right\}$ with piece-wise addition $+$ and order $\mleq$ given by the standard order of reals and $\mathrm{dx}\mlt a$ for every positive real number $a$. Then $(R^*, +, \mleq, 0)$ is also a distance monoid.
\item The ultrametric $([n], \max, \leq, 0)$, where $[n] = \{0, 1, \ldots, n-1\}$ and $\leq$ is the linear order of integers is a distance monoid.
\end{enumerate}
\end{example}

Given a monoid $(M, \oplus, 0)$, $n\geq 0$ and $r\in M$ we denote by $n\times r$ a summation $r\oplus r\oplus \cdots \oplus r$ of length $n$.

\begin{definition}
A distance monoid $\EMonoid$ is \emph{archimedean} if, for all $r,s\in M$, $r,s\neq 0$, there exists some integer $n>0$ such that $s\mleq n\times r$.
\end{definition}
\begin{example}
Consider the reals extended by infinitesimals as in Example~\ref{example1} (\ref{infini}). This monoid is not archimedean, because $n\times\mathrm{dx} \mlt b$ for every positive real $b$, every integer $n$ and infinitesimal $\mathrm{dx}$.
\end{example}

In Section~\ref{sec:background} we briefly introduce necessary model-theoretic background.
In Section~\ref{sec:multiamalgamation} we review Ramsey classes defined by means of forbidden subconfigurations in the setting of~\cite{Hubicka2016}. Although we deal with metric spaces and thus binary systems, it is useful to formulate it in the context of structures involving both relations and functions which will be used in the proof of our main result.
In Section~\ref{sec:metric} we discuss simple algorithm completing graphs to metric spaces which is essential for our approach.
In Section~\ref{sec:Mmetricarch} we show that the class of finite ordered $\Monoid$-metric spaces is Ramsey for every archimedean monoid $\Monoid$.
Finally, in Section~\ref{sec:Metric} we prove the main result and in Section~\ref{sec:remarks} we discuss future directions of research.

\section{Preliminaries}
\label{sec:background}
We now review some standard model-theoretic notions of structures with relations and functions (see e.g.~\cite{Hodges1993}) with a small variation  that our functions will be partial and symmetric. We follow~\cite{Nevsetvril1976}.

Let $L=L_{\mathcal{R}}\cup L_{\mathcal{F}}$ be a language involving relational symbols $\rel{}{}\in L_{\mathcal{R}}$ and function symbols $F\in L_{\mathcal{F}}$ each having associated arities denoted by $\arity{}>0$ for relations and $d(F)>0, r(F)>0$ for functions. 
An \emph{$L$-structure} is a structure $\str{A}$ with {\em vertex set} $A$, functions 
$\func{A}{}:\dom(\func{A}{})\to {A\choose {r(\func{}{})}}$,
 $\dom(\func{A}{})\subseteq A^{d(\func{}{})}$ for $\func{}{}\in L_F$ and relations $\rel{A}{}\subseteq A^{\arity{}}$ for $\rel{}{}\in L_R$. (Note that  by ${A\choose {r(\func{}{})}}$ we denote, as is usual in this context, the set of all $r(\func{}{})$-element subsets of $A$.)
The set $\dom(\func{A}{})$ is called the {\em domain} of function $\func{}{}$  in $\str{A}$.

Note also that we have chosen to have the range of the function symbols to be the set of subsets (not tuples). This is motivated by \cite{Evans3} where we deal with (Hrushovski) extension properties and we need a ``symmetric'' range. However from the point of view of Ramsey theory this is not an important issue.
 
The language is usually fixed and understood from the context (and it is in most cases denoted by $L$).  If the set $A$ is finite we call $\str A$ a \emph{finite $L$-structure}. We consider only structures with countably many vertices. 
If the language $L$ contains no function symbols, we call $L$ a {\em relational language} and an $L$-structure is also called a \emph{relational $L$-structure}.
Every function symbol $\func{}{}$ such that $d(\func{}{})=1$ is a {\em unary function}. A unary relation of course just defines a subset of elements of $A$.
All functions used in this paper are unary.

\medskip

A \emph{homomorphism} $f:\str{A}\to \str{B}$ is a mapping $f:A\to B$ satisfying for every $\rel{}{}\in L_{\mathcal R}$ and for every $\func{}{}\in L_\mathcal F$ the following two statements:
\begin{enumerate}
\item[(a)] $(x_1,x_2,\ldots, x_{\arity{}})\in \rel{A}{}\implies (f(x_1),f(x_2),\ldots,f(x_{\arity{}}))\in \rel{B}{}$, and,
\item[(b)]  $f(\dom(\func{A}{}))\subseteq \dom(\func{B}{})$
 and $f(\func{A}{}(x_1,x_2,\allowbreak \ldots, x_{d(\func{}{})}))=\func{B}{}(f(x_1),\allowbreak f(x_2),\allowbreak \ldots,\allowbreak f(x_{d(\func{}{})}))$ for every $(x_1,x_2,\allowbreak \ldots, x_{d(\func{}{})})\in \dom(\func{A}{})$.
\end{enumerate} 
For a subset $A'\subseteq A$ we denote by $f(A')$ the set $\{f(x);x\in A'\}$ and by $f(\str{A})$ the homomorphic image of a structure.

 If $f$ is injective, then $f$ is called a \emph{monomorphism}. A monomorphism is called an \emph{embedding} if
for every $\rel{}{}\in L_{\mathcal R}$ and $\func{}{}\in L_\mathcal F$ the following holds:
\begin{enumerate}
\item[(a)] $(x_1,x_2,\ldots, x_{\arity{}})\in \rel{A}{}\iff (f(x_1),f(x_2),\ldots,f(x_{\arity{}}))\in \rel{B}{}$, and,
\item[(b)]  
$(x_1,x_2,\ldots, x_{d(\func{}{})})\in\dom(\func{A}{}) \iff (f(x_1),f(x_2),\ldots,f(x_{d(\func{}{})}))\in \dom(\func{B}{}).$
\end{enumerate}
 
  If $f$ is an embedding which is an inclusion then $\str{A}$ is a \emph{substructure} (or \emph{subobject}) of $\str{B}$.
Observe that for structures with functions it does not hold that every choice of $A\subseteq B$ induces a substructure of $\str{B}$.

 For an embedding $f:\str{A}\to \str{B}$ we say that $\str{A}$ is \emph{isomorphic} to $f(\str{A})$ and $f(\str{A})$ is also called a \emph{copy} of $\str{A}$ in $\str{B}$. Thus $\str{B}\choose \str{A}$ is defined as the set of all copies of $\str{A}$ in $\str{B}$. Finally, $\Str(L)$ denotes the class of all finite $L$-structures and all their embeddings.

\medskip

Let $\str{A}$, $\str{B}_1$ and $\str{B}_2$ be structures with $\alpha_1$ an embedding of $\str{A}$
into $\str{B}_1$ and $\alpha_2$ an embedding of $\str{A}$ into $\str{B}_2$. Then
every structure $\str{C}$
 together with embeddings $\beta_1:\str{B}_1 \to \str{C}$ and
$\beta_2:\str{B}_2\to\str{C}$ satisfying $\beta_1\circ\alpha_1 =
\beta_2\circ\alpha_2$ is called an \emph{amalgamation of $\str{B}_1$ and $\str{B}_2$ over $\str{A}$ with respect to $\alpha_1$ and $\alpha_2$}. 
We will call $\str{C}$ simply an \emph{amalgamation} of $\str{B}_1$ and $\str{B}_2$ over $\str{A}$
(as in most cases $\alpha_1$, $\alpha_2$ and $\beta_1$, $\beta_2$ can be chosen to be inclusion embeddings).

An amalgamation is \emph{strong} if $C=\beta_1(B_1)\cup \beta_2(B_2)$ and moreover $\beta_1(x_1)=\beta_2(x_2)$ if and only if $x_1\in \alpha_1(A)$ and $x_2\in \alpha_2(A)$.
A strong amalgamation is \emph{free}  there are no tuples in any relations  of $\str{C}$ and $\dom(\func{C}{})$, $\func{}{}\in L_\mathcal F$, using both vertices of
$\beta_1(B_1\setminus \alpha_1(A))$ and $\beta_2(B_2\setminus \alpha_2(A))$.
An \emph{amalgamation class} is a class $\K$ of finite structures satisfying the following three conditions:
\begin{enumerate}
\item {\em Hereditary property:} For every $\str{A}\in \K$ and every substructure $\str{B}$ of $\str{A}$ we have $\str{B}\in \K$;
\item {\em Joint embedding property:} For every $\str{A}, \str{B}\in \K$ there exists $\str{C}\in \K$ such that $\str{C}$ contains both $\str{A}$ and $\str{B}$ as substructures;
\item {\em Amalgamation property:} 
For $\str{A},\str{B}_1,\str{B}_2\in \K$ and $\alpha_1$ embedding of $\str{A}$ into $\str{B}_1$, $\alpha_2$ embedding of $\str{A}$ into $\str{B}_2$, there is $\str{C}\in \K$ which is an amalgamation of $\str{B}_1$ and $\str{B}_2$ over $\str{A}$ with respect to $\alpha_1$ and $\alpha_2$.
\end{enumerate}
If the $\str{C}$ in the amalgamation property can always be chosen as the free amalgamation, then $\K$ is a {\em free amalgamation class}.

\section{Previous work --- multiamalgamation} 
\label{sec:multiamalgamation}

We now refine amalgamation classes.
Our aim is to describe strong sufficient criteria for  Ramsey classes. In this paper we follow~\cite{Hubicka2016}.

For $L$ containing a binary relation $\rel{}{\leq}$ we consider the class of all finite $L$-structures $\str{A}$ where the set $A$ is linearly ordered by the relation $\rel{}{\leq}$, of course with all monotone (i.e.~order preserving) embeddings.

\begin{figure}
\centering
\includegraphics{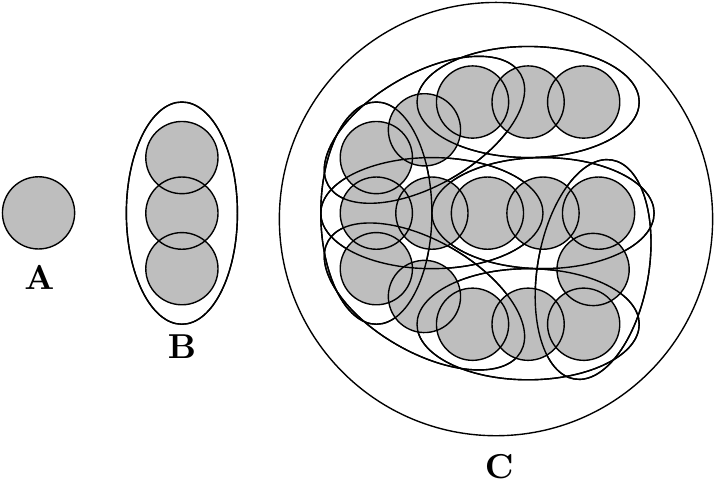}
\caption{Construction of a Ramsey object by multiamalgamation.}
\label{fig:multiamalgam}
\end{figure}
In this setting, we develop a generalised notion of amalgamation which will serve as a useful tool for the construction of Ramsey objects. As schematically depicted in Figure~\ref{fig:multiamalgam}, Ramsey objects are a result of
amalgamation of multiple copies of a given structure which are all performed at once. In a non-trivial class this leads to many problems. We split the amalgamation into two steps---the construction of (up to
isomorphism unique) free amalgamation (which yields an incomplete or ``partial'' structure) followed then by a completion. Formally this is done as follows:

\begin{definition}
\label{def:irreducible}
An $L$-structure $\str{A}$ is \emph{irreducible} if 
$\str{A}$ is not a free amalgamation of two proper substructures of $\str{A}$.
\end{definition}

Thus the irreducibility is meant with respect to the free amalgamation. The irreducible structures are our building blocks.
Moreover in structural Ramsey theory we are fortunate that most structures are (or may be interpreted as) irreducible. And in the most interesting case, the structures may be completed to irreducible structures. 
This will be introduced now by means of the following variant of the homomorphism notion.
\begin{definition}
 A homomorphism
$f:\str{A}\to\str{B}$ is a \emph{homomorphism-em\-bed\-ding}  if $f$ restricted to any irreducible substructure of $\str{A}$      
is an embedding to $\str{B}$.
\end{definition}
While for (undirected) graphs the notions homomorphism and homo\-morphism-em\-bed\-ding coincide, for structures they differ.
For example any homo\-morphism-em\-bedding of the Fano plane into a 3-hypergraph is actually an embedding.

\begin{definition}
\label{defn:completion}
Let $\str{C}$ be a structure. An irreducible structure $\str{C}'$ is a \emph{completion}
of $\str{C}$ if there exists a homomorphism-embedding $\str{C}\to\str{C}'$.
If there is a homomorphism-embedding $\str{C}\to\str{C}'$ which is one-to-one,
we call $\str{C}'$ a \emph{strong completion}.

Let $\str{B}$ be an irreducible substructure of $\str{C}$.  We
say that irreducible structure $\str{C}'$ is a \emph{completion of $\str{C}$ with respect to
copies of $\str{B}$} if there exists a function
$f:C\to C'$ such that for every  $\widetilde{\str{B}}\in
{\str{C}\choose \str{B}}$ the function $f$ restricted to $\widetilde{B}$
is an embedding of $\widetilde{\str{B}}$ to $\str{C}'$.
\end{definition}
We now state all necessary conditions for the main result of \cite{Hubicka2016} which will be used subsequently
(omitting the notion of closure description which is not needed here).
%
\begin{definition}
\label{def:multiamalgamation}
Let $L$ be a language, $\mathcal R$ be a Ramsey class of finite irreducible $L$-structures.
We say that a subclass $\mathcal K$ of $\mathcal R$  is an \emph{$\mathcal R$-multi\-amal\-ga\-ma\-tion class} if
the following conditions are satisfied:
\begin{enumerate}
 \item\label{cond:hereditary} {\em Hereditary property:} For every $\str{A}\in \K$ and a substructure $\str{B}$ of $\str{A}$ we have $\str{B}\in \K$.
 \item\label{cond:amalgamation} {\em Strong amalgamation property:}
For $\str{A},\str{B}_1,\str{B}_2\in \K$ and embeddings $\alpha_1\colon\str{A}\to\str{B}_1$, $\alpha_2\colon\str{A}\to\str{B}_2$, there is $\str{C}\in \K$ which is a strong amalgamation of $\str{B}_1$ and $\str{B}_2$ over $\str{A}$ with respect to $\alpha_1$ and $\alpha_2$.
 \item\label{cond:completion} {\em Locally finite completion property:} Let $\str{B}\in \K$ and $\str{C}_0\in \mathcal R$. 
Then there exists $n=n(\str{B},\str{C}_0)$ such that if a $L$-structure $\str{C}$ satisfies the following:
\begin{enumerate}
 \item $\str{C}_0$ is a completion of $\str{C}$, 
 \item every irreducible substructure of $\str{C}$ is in $\mathcal K$, and
 \item every substructure of $\str{C}$ with at most $n$ vertices has a $\K$-com\-ple\-tion,
\end{enumerate}
then there exists $\str{C}'\in \K$ which is a completion of $\str{C}$ with respect to copies of $\str{B}$.
\end{enumerate}
\end{definition}

We can now state the main result of~\cite{Hubicka2016} as:
\begin{theorem}[Hubi\v cka, Ne\v set\v ril~\cite{Hubicka2016}]
\label{thm:mainstrongclosures}
Every $\mathcal R$-multiamalgamation class $\K$ is Ramsey.
\end{theorem}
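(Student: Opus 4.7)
The plan is to reduce the Ramsey property of $\K$ to that of the ambient class $\mathcal R$, using the locally finite completion property as the bridge. Fix $\str{A},\str{B}\in\K$ and $k\geq 1$; the goal is to produce $\str{C}\in\K$ with $\str{C}\longrightarrow(\str{B})^{\str{A}}_k$.

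First I would invoke the Ramsey property of $\mathcal R$ (to which $\str{A}$ and $\str{B}$ belong because $\K\subseteq\mathcal R$) to obtain an irreducible $\str{C}_0\in\mathcal R$ satisfying $\str{C}_0\longrightarrow(\str{B})^{\str{A}}_k$, and then extract the integer $n=n(\str{B},\str{C}_0)$ guaranteed by the locally finite completion property. The structure $\str{C}_0$ will serve as the ``Ramsey skeleton'' that dictates which copies of $\str{A}$ and $\str{B}$ the final object must contain.

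Next I would manufacture an auxiliary, possibly non-irreducible structure $\widehat{\str{C}}$ whose vertex set is (essentially) that of $\str{C}_0$ and whose irreducible substructures all lie in $\K$: for every copy $\widetilde{\str{B}}\in\binom{\str{C}_0}{\str{B}}$ I would overlay a genuine copy of $\str{B}\in\K$ on its vertices. Iterated use of the strong amalgamation property of $\K$ ensures that overlapping overlays cohere, that $\str{C}_0$ is a completion of $\widehat{\str{C}}$, and that every substructure of $\widehat{\str{C}}$ on at most $n$ vertices has a $\K$-completion (amalgamate the finitely many relevant copies of $\str{B}$ one by one). Applying the locally finite completion property to $\widehat{\str{C}}$ and $\str{C}_0$ then yields $\str{C}\in\K$ which is a completion of $\widehat{\str{C}}$ with respect to copies of $\str{B}$; this $\str{C}$ is the candidate Ramsey object.

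Finally, given a colouring $\chi\colon\binom{\str{C}}{\str{A}}\to[k]$, I would pull it back along the completion map $f\colon\widehat{\str{C}}\to\str{C}$ (well-defined on copies of $\str{A}$ sitting inside copies of $\str{B}$, since those are preserved as embeddings) and then push it forward along the homomorphism-embedding $\widehat{\str{C}}\to\str{C}_0$ to obtain a $k$-colouring of $\binom{\str{C}_0}{\str{A}}$; the Ramsey property of $\str{C}_0$ produces a monochromatic $\widetilde{\str{B}}\in\binom{\str{C}_0}{\str{B}}$, which by construction lifts to a monochromatic copy of $\str{B}$ in $\str{C}$. The main obstacle I expect is ensuring that the overlay construction of $\widehat{\str{C}}$ is canonical enough that (i) copies of $\str{B}$ in $\widehat{\str{C}}$ are in a functorial bijection with those of $\str{C}_0$, (ii) every $n$-vertex substructure really has a $\K$-completion, and (iii) the push/pull of colourings between $\str{C}$, $\widehat{\str{C}}$ and $\str{C}_0$ is well-defined and monochromaticity-preserving; all three hinge on using strong amalgamation in $\K$ carefully, and the notion of completion \emph{with respect to copies of} $\str{B}$ is precisely what decouples copies of $\str{A}$ inside distinguished copies of $\str{B}$ from the ambient (merely-$\mathcal R$) context.
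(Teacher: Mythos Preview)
First, note that the paper does \emph{not} prove Theorem~\ref{thm:mainstrongclosures}; it merely quotes it from~\cite{Hubicka2016} and remarks that ``the proof of this result is not easy and involves interplay of several key constructions of structural Ramsey theory, particularly Partite Lemma and Partite Construction.'' So there is no proof in the paper to compare against directly, but the authors explicitly flag that the argument needs the partite machinery.

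Your outline contains a genuine gap at the point where you assert that ``every substructure of $\widehat{\str{C}}$ on at most $n$ vertices has a $\K$-completion (amalgamate the finitely many relevant copies of $\str{B}$ one by one).'' This is the heart of the matter and in general it is false for your $\widehat{\str{C}}$. The witness $\str{C}_0$ lies only in $\mathcal R$, not in $\K$, and you have no control over how distinct copies of $\str{B}$ overlap inside it. Concretely, take $\K$ to be ordered metric spaces and $\mathcal R$ ordered complete edge-labelled graphs: in $\str{C}_0$ you may find vertices $u,v,w$ with each pair lying inside some copy of $\str{B}$ (hence the three edges are all present in $\widehat{\str{C}}$, carrying the labels they have in $\str{C}_0$), yet the triple $\{u,v,w\}$ is contained in no single copy of $\str{B}$. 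Nothing then prevents these three labels from violating the triangle inequality, so the induced $3$-vertex substructure of $\widehat{\str{C}}$ already has no $\K$-completion. Binary strong amalgamation cannot rescue this: you are not amalgamating two $\K$-structures over a common substructure, you are facing a triangle that is simply not in $\K$.

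This is precisely the obstacle that the Partite Lemma and Partite Construction of~\cite{Hubicka2016} are designed to overcome. Rather than taking an arbitrary Ramsey witness $\str{C}_0\in\mathcal R$ and overlaying, one builds an intermediate partite object in which copies of $\str{B}$ are forced to intersect only along prescribed ``transversal'' pieces; the partite lemma provides Ramsey witnesses within a single part, and the iterated partite construction re-amalgamates across parts while keeping the intersections of copies tame enough that the locally finite completion hypothesis is actually satisfied. Your colour-transfer step (pull back along $f$, push forward to $\str{C}_0$, extend arbitrarily on stray copies of $\str{A}$) is fine in spirit, but it only becomes available once one has produced a $\widehat{\str{C}}$ that genuinely meets condition~(c) of Definition~\ref{def:multiamalgamation}, and for that the partite machinery---or something of comparable strength---appears unavoidable.
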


The proof of this result is not easy and involves interplay of several key constructions of 
structural Ramsey theory, particularly Partite Lemma and Partite Construction (see \cite{Hubicka2016} for details). 

We will also make use of the following recent strengthening of Ne\v set\v ril-R\"odl Theorem~\cite{Nevsetvril1976}:
\begin{theorem}[Evans, Hubi\v cka, Ne\v set\v ril~\cite{Evans3}]
\label{thm:NRclosures}
Let $L$ be a language (involving relational symbols and partial functions) and let 
$\K$ be a free amalgamation class of $L$-structures. 
 Then $\vv{\K}$, the class of all structures from $\K$ equipped with an additional linear order $\leq$, is a Ramsey class.
\end{theorem}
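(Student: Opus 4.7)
The plan is to invoke the multi-amalgamation framework encapsulated in Theorem~\ref{thm:mainstrongclosures}. To apply it I need an ambient Ramsey class $\mathcal{R}$ of finite linearly ordered irreducible $L$-structures containing $\vv{\K}$, and I must then verify the three conditions of Definition~\ref{def:multiamalgamation}. For the ambient class I would take $\mathcal{R}$ to be the class of all finite linearly ordered irreducible $L$-structures, establishing its Ramsey property by encoding each partial function symbol $F\in L_{\mathcal{F}}$ as a single relation symbol $R_F$ of arity $d(F)+r(F)$, symmetric in the last $r(F)$ coordinates to reflect the set-valued range. The resulting purely relational language falls within the scope of the classical Ne\v{s}et\v{r}il--R\"{o}dl theorem, and pulling the Ramsey property back through the encoding yields the desired $\mathcal{R}$.

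Of the three multi-amalgamation axioms, the hereditary property is inherited directly from $\K$, since restrictions of linear orders remain linear orders. For strong amalgamation, given $\str{A},\str{B}_1,\str{B}_2\in\vv{\K}$ with embeddings $\alpha_i\colon\str{A}\to\str{B}_i$, I would take the free amalgamation of $\str{B}_1$ and $\str{B}_2$ over $\str{A}$; by the free amalgamation hypothesis on $\K$ this lies in $\K$ and is strong by definition, and the two linear orders can be merged into a linear order on the amalgam by interleaving the new vertices consistently with both original orders along the common part $\alpha_1(A)=\alpha_2(A)$.

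The expected main obstacle is the locally finite completion axiom. Given $\str{B}\in\vv{\K}$ and $\str{C}_0\in\mathcal{R}$, I would take the bound $n(\str{B},\str{C}_0)$ to depend only on $|B|$ and the maximum arity occurring in $L$. The key observation, exploiting the free amalgamation property, is that membership in $\K$ is a purely local condition---no global obstruction can arise---so any $L$-structure all of whose irreducible substructures lie in $\K$ and all of whose bounded-size substructures are $\vv{\K}$-completable admits a genuine $\vv{\K}$-completion. Concretely, the completion $\str{C}'$ with respect to copies of $\str{B}$ in $\str{C}$ is produced by iteratively free-amalgamating these copies along their intersections, oriented by the linear order inherited from $\str{C}_0$ to break ties. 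The delicate point is the bookkeeping for partial function symbols: one must ensure that no functional value is inadvertently forced across an amalgamation boundary, which is exactly what the freeness of the amalgamation safeguards. With the three conditions verified, Theorem~\ref{thm:mainstrongclosures} yields the Ramsey property of $\vv{\K}$.
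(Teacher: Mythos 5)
The paper does not actually prove Theorem~\ref{thm:NRclosures}; it is imported wholesale from \cite{Evans3}, where it is established by a direct (iterated) partite construction adapted to languages with partial functions. Your proposal therefore has to stand on its own, and its decisive gap is the very first step: the claim that the ambient class of ordered $L$-structures is Ramsey because one can ``encode each partial function symbol $F$ as a relation symbol $R_F$'' and pull the Ramsey property back through the encoding from the classical Ne\v set\v ril--R\"odl theorem. This pull-back fails because the encoding does not preserve embeddings. In the functional language an embedding must \emph{reflect} function domains (condition (b) in Section~\ref{sec:background}), so substructures are closed under the functions; in the relational encoding every induced subset is a substructure. Hence the copy-set ${\str{C}\choose \str{A}}$ computed relationally strictly contains the set of functional copies, and a monochromatic relational copy of $\str{B}$ produced by the relational theorem need not be closed under the functions of $\str{C}$, i.e.\ need not be a copy of $\str{B}$ at all in the required sense. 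This mismatch is exactly what makes Theorem~\ref{thm:NRclosures} a genuine strengthening rather than a formal consequence of the relational result. Moreover, the role this theorem plays in the present paper is precisely to supply the base Ramsey class $\mathcal R_\Monoid$ of $L^\star_\Monoid$-structures needed to run Theorem~\ref{thm:mainstrongclosures} in a language with functions (see the proof of Lemma~\ref{lem:finite4value2}); deriving it \emph{from} Theorem~\ref{thm:mainstrongclosures} presupposes essentially the statement to be proved.

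A second, independent problem lies in the completion step. By Definition~\ref{defn:completion} a completion must be an \emph{irreducible} structure, and a typical free amalgamation class has no large irreducible members: for $\K$ the class of triangle-free graphs, the irreducible structures in $\K$ have at most two vertices, so no structure on three or more vertices admits a $\K$-completion and the locally finite completion property of Definition~\ref{def:multiamalgamation} cannot be satisfied. Your proposed bound $n(\str{B},\str{C}_0)$ ``depending only on $|B|$ and the maximum arity'' and the assertion that ``no global obstruction can arise'' are therefore not merely unjustified; the multiamalgamation framework of Section~\ref{sec:multiamalgamation} is the wrong tool for this statement, which instead has to be proved by a bespoke partite-construction argument as in \cite{Evans3}.
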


\section{Shortest path completion}
\label{sec:metric}
We first show some basic facts about completion to $\Monoid$-metric spaces.
This is similar to~\cite{Hubicka2016} and also to the analysis given in~\cite{Conant2015} proceeds similarly.

Given a distance monoid $\EMonoid$ we interpret an $\Monoid$-metric space as a
relational structure $\str{A}$ in the language $L_\Monoid$ with (possibly infinitely many) binary relations $\rel{}{s}$, $s\in M^{\succ 0}$, where
we put, for every $u\neq v\in A$, $(u,v)\in \rel{A}{s}$ if and only if $d(u,v)=s$. We do not explicitly represent that $d(u,u)=0$ (i.e.~no loops are added). We will also consider ordered $\Monoid$-metric spaces, where the language will further contain a binary relation $\leq$ representing a linear order on the vertices.

\begin{definition}
An \emph{$\Monoid$-graph} is an $L_\Monoid$-structure where all relations are symmetric and irreflexive and every pair of vertices is in at most one relation. (Alternatively, a graph with edges labelled by $M^{\succ 0}$).

An \emph{$\Monoid$-metric graph} us an $\Monoid$-graph which is a non-induced substructure of an $\Monoid$-metric
space (interpreted as $L_\Monoid$-structure) such that all relations are symmetric.

Every $\Monoid$-graph that is not $\Monoid$-metric is a \emph{non-$\Monoid$-metric graph}.
\end{definition}

Observe that $\Monoid$-metric graphs are precisely those structures which have a strong completion to an $\Monoid$-metric space in the sense of Definition~\ref{defn:completion}.

For $\Monoid$-graph $\str{A}$ we will write $d(u,v)=\ell$ if $(a,b)\in \rel{A}{\ell}$. Value of $d(u,v)$ is undefined otherwise.

In the language of $\Monoid$-graphs we will use the following variants of standard graph-theoretic notions.
Given $\Monoid$-graph $\str{A}$ the {\em walk} from $u$ to $v$ is any sequence of vertices $u=v_1,v_2,\ldots, v_n=v\in \str{A}$
such that $d(v_i,v_{i+1})$ is defined for every $1\leq i<n$. If the sequence contains no repeated vertices, it is a \emph{path}.
The $\Monoid$-length of walk (or path) is $d(v_1, v_2)\oplus d(v_2,v_3)\oplus \cdots\oplus d(v_{n-1},v_n)$.
Given vertices $u$ and $v$ the \emph{shortest path from $u$ to $v$} is any
path from $u$ to $v$ such that there is no other path from $u$ to $v$ of strictly smaller $\Monoid$-length (in the order $\mleq$).
We say that $\str{A}$ is \emph{connected} if there exists a path from $u$ to $v$ for every choice of $u\neq v\in A$.

\begin{definition}[Shortest path completion]
Let $\EMonoid$ be a distance monoid and $\str{G}=(G,d)$ be a (finite) connected $\Monoid$-metric graph. For every $u,v\in A$ define $d'(u,v)$ to be the minimum of the $\Monoid$-lengths
of all walks from $u$ to $v$ in $\str{G}$. Then we call the complete $\Monoid$-metric graph $\str{A}=(G,d')$ the {\em
shortest path completion} of $\str{G}$.

Given an $\Monoid$-metric graph we also denote by $\mathcal W(u,v)$ a shortest path connecting $u$ to $v$ such that its $\Monoid$-length is $d'(u,v)$ (there can be multiple of them, in that case pick an arbitrary one).
\end{definition}

The following is the main result of this section.
\begin{proposition}
\label{prop:cycles}
Let $\EMonoid$ be a distance monoid and $\str{G}$ be a finite $\Monoid$-graph.
\begin{enumerate}
\item \label{cor:suaer1:1} If $\str{G}$ is connected and $\Monoid$-metric, then its shortest path completion $\str{A}$ is an $\Monoid$-metric space and it is a strong completion of $\str G$ in the sense of Definition~\ref{defn:completion}.
\item \label{cor:suaer1:2} $\str{G}$ is $\Monoid$-metric if and only if it contains no homomorphic image of a non-$\Monoid$-metric cycle $\str{C}$ (a cycle $v_1,v_2,\ldots,v_n$ such that $d(v_1,v_n)\mgt d(v_1,v_2)\oplus \cdots \oplus d(v_{n-1},v_n))$.
\item \label{cor:suaer1:3} $\str{G}$ contains a homomorphic image of a non-$\Monoid$-metric cycle if and only if it contains a non-$\Monoid$-metric cycle as an induced substructure (i.e.~a monomorphic image).
\item \label{cor:suaer1:4} $\mathcal M_\Monoid$ is an amalgamation class closed for strong amalgamation.
\end{enumerate}
\end{proposition}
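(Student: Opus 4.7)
For (1) I verify directly that $\str{A} = (G, d')$ satisfies the axioms of an $\Monoid$-metric space, and I then handle (2)--(4) by leveraging this construction. Symmetry of $d'$ is immediate; $d'(u,v) \mgt 0$ for $u \neq v$ holds because $0$ is the least element of $\mleq$ and $\oplus$ is monotone, so any $\oplus$-sum of positive elements is positive; the triangle inequality follows by concatenating two shortest walks. Strongness of the completion is the substantive point: by hypothesis $\str{G}$ embeds (non-induced) into some $\Monoid$-metric space $\str{H}$, and iterating the triangle inequality in $\str{H}$ along any walk in $\str{G}$ from $u$ to $v$ shows the walk has $\Monoid$-length $\mgeq d_{\str{H}}(u,v) = d_{\str{G}}(u,v)$, so the direct edge attains the infimum.

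Part (2) then follows from (1) applied componentwise. The forward direction is immediate, since a homomorphic image of a non-$\Monoid$-metric cycle produces a walk in $\str{G}$ whose $\Monoid$-length is strictly less than a particular edge label, contradicting iterated triangle inequality in any ambient $\Monoid$-metric space. Conversely, if $\str{G}$ is not $\Monoid$-metric then in some connected component the shortest path completion fails on an edge $(u,v) \in \rel{G}{s}$, i.e.\ $d'(u,v) \mlt s$; a shortest walk realising $d'(u,v)$ may be taken to be a path and, with $(u,v)$, forms a non-$\Monoid$-metric cycle appearing as a subgraph of $\str{G}$.

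Part (3) requires a minimality argument. I choose a non-$\Monoid$-metric cycle $C = v_1 \cdots v_n$ with long edge $(v_1, v_n)$ of length $s$ and shorter edges $s_i = d(v_i, v_{i+1})$, whose homomorphic image in $\str{G}$ has minimum number of vertices, and perform two reductions. If a homomorphism $h$ identifies $h(v_i) = h(v_j)$ with $\{i,j\} \neq \{1,n\}$, then the cycle on $v_1 \cdots v_i v_{j+1} \cdots v_n$ is still non-$\Monoid$-metric, because deleting $s_i, \ldots, s_{j-1}$ from $s_1 \oplus \cdots \oplus s_{n-1}$ can only decrease it (since $a \oplus b \mgeq a$ in a positively ordered monoid) and the shorter sum remains $\mlt s$; this contradicts minimality. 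If $h$ is injective but there is a chord $(h(v_i), h(v_j))$ of length $t$ with $j > i+1$ and $(i,j) \neq (1,n)$, I split $C$ into $C_1 = v_i \cdots v_j$ (closed by the chord) and $C_2 = v_1 \cdots v_i v_j \cdots v_n$ (keeping the long edge). If both were $\Monoid$-metric, then $t \mleq s_i \oplus \cdots \oplus s_{j-1}$ (from $C_1$) together with $s \mleq s_1 \oplus \cdots \oplus s_{i-1} \oplus t \oplus s_j \oplus \cdots \oplus s_{n-1}$ (from $C_2$) would yield, by monotonicity of $\oplus$, $s \mleq s_1 \oplus \cdots \oplus s_{n-1}$, contradicting non-metricity of $C$; hence one of $C_1, C_2$ is a smaller non-metric cycle, again a contradiction.

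For (4) the hereditary property is trivial. For a strong amalgamation of $\str{B}_1$ and $\str{B}_2$ over $\str{A}$ I may assume $B_1 \cap B_2 = A$ and form the $\Monoid$-graph $\str{C}_0$ on $B_1 \cup B_2$ containing each $\str{B}_i$ as a substructure and no edges between $B_1 \setminus A$ and $B_2 \setminus A$. When $A \neq \emptyset$, $\str{C}_0$ is connected and I take its shortest path completion, an $\Monoid$-metric space by the same axiom check as in (1); to see it is a strong amalgamation I verify that distances inside each $\str{B}_i$ are preserved by a shortcut argument. Any walk in $\str{C}_0$ between two vertices of $B_1$ can be rerouted inside $B_1$ by replacing each $B_2$-excursion (which begins and ends in $A \subseteq B_1$, where $d_{\str{B}_1}$ and $d_{\str{B}_2}$ agree) by the direct edge in $\str{B}_2$, of no greater length by the triangle inequality in $\str{B}_2$; the resulting $B_1$-walk then has length $\mgeq d_{\str{B}_1}(u,v)$, so the direct edge is shortest. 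When $A = \emptyset$ I pick $r \in M$ at least as large as any distance appearing in $\str{B}_1 \cup \str{B}_2$ (which exists since the distance set is finite and $\mleq$ is linear) and declare $d(u,v) = r$ for every cross pair; triangle inequality reduces to an easy case check. The main obstacle throughout is the absence of any cancellation in $\Monoid$: every inequality must go through monotonicity of $\oplus$ and $0 \mleq a$ only, and this is most exposed in the chord-splitting step of (3).
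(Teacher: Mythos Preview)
Your proof is correct and follows essentially the same strategy as the paper's: shortest-path completion for (1), the failure of that completion to locate a bad cycle for (2), a minimality argument for (3), and completion of the free amalgam for (4). The only notable deviations are in presentation: for (3) the paper dispatches the induced-cycle direction in one line (``take the minimal subcycle of the homomorphic image containing the edge $v_1v_n$''), whereas your chord-splitting argument spells out the same idea rigorously; for (4) the paper simply observes that the free amalgamation contains no embedded non-$\Monoid$-metric cycle and then appeals to (1)--(3), while you construct the shortest-path completion directly, verify distance preservation via the shortcut argument, and handle the $A=\emptyset$ case explicitly---both routes amount to the same thing.
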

\begin{example}
\begin{figure}
\centering
\includegraphics{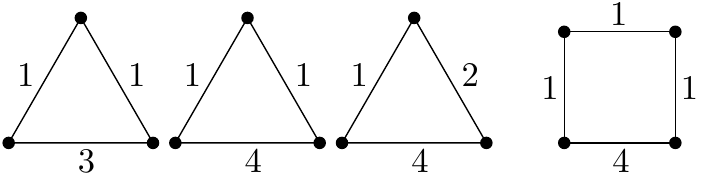}
\caption{All non-metric cycles with distances $1,2,3$ and $4$.}
\label{fig:4graphs}
\end{figure}
Consider the monoid $\Monoid_S$ as in Example~\ref{example1}~(\ref{ex:Smetric}) for $S=\{1,\allowbreak 2,\allowbreak 3,\allowbreak 4\}$.
Figure~\ref{fig:4graphs} depicts all non-metric cycles, given by Proposition~\ref{prop:cycles} (\ref{cor:suaer1:3}), which prevent $\Monoid$-graphs from having a completion to an $\Monoid$-metric space.
\end{example}
\begin{proof}
All three statements are consequences of associativity of $\oplus$.

\medskip
\paragraph{(\ref{cor:suaer1:1})}
Assume that $\str{G}$ is $\Monoid$-metric. First we show that the completion
described will give an $\Monoid$-metric space by verifying that $d'$ satisfies the triangle inequality. Take any three vertices $u,v,w$. Combine the walks
$\mathcal W(u,w)$ and $\mathcal W(w,v)$ to get a walk from $u$ to $v$ in $\str{G}$ whose length is $d'(u,w)\oplus d'(w,v)$.
It follows that $d'(u,v)\mleq d'(u,w)\oplus d'(w,v)$.

We have shown that $d'$ forms an $\Monoid$-metric space on vertices of $\str{G}$ but it still needs to be checked that $d_\str{G}(u,v)=d'(u,v)$ whenever $d_\str{G}(u,v)$ is defined.
 We show a stronger claim: if $\str{B}$ is a completion of $\str{G}$ to an $\Monoid$-metric space
then $d_\str{B}(u,v)\mleq d'(u,v)$ for every $u\neq v\in \str{G}$.

Suppose, for a contradiction, that there are vertices $u\neq v\in \str G$ such that $d_\str{B}(u,v)\mgt d'(u,v)$. By definition of $d'$, there is a path $\mathcal W(u,v)$ in $\str{G}$ with $d'(u,v)$ being its length and then $d_\str{B}(u,v)\mgt d'(u,v)$ contradicts $\str{B}$ being an $\Monoid$-metric space.

\medskip
We first show (\ref{cor:suaer1:2}) and (\ref{cor:suaer1:3}) for connected $\Monoid$-graphs only.
\medskip
\paragraph{(\ref{cor:suaer1:2})}
Assume that $\str{G} = (G,d)$ is non-$\Monoid$-metric and let $\str A = (G, d')$ be its shortest path completion.
$\str{G}$ being non-$\Monoid$-metric means that there are vertices $u\neq v\in G$ with $d'(u,v) \mlt d(u,v)$. But that means
that the $\Monoid$-length of $\mathcal W(u,v)$ is strictly less than $d(u,v)$, hence this path together
with the edge $u,v$ forms a non-$\Monoid$-metric cycle.

\medskip
\paragraph{(\ref{cor:suaer1:3})} Again first consider connected $\Monoid$-graphs only.
One implication is trivial. The other follows from $\oplus$ being monotonous with respect to $\mleq$ -- it is enough to take the minimal subcycle of the homomorphic image of the non-$\Monoid$-metric cycle containing the edge $v_1v_n$ (see (\ref{cor:suaer1:2})).

\medskip
Now it is easy to see that both (\ref{cor:suaer1:2}) and (\ref{cor:suaer1:3}) also hold for $\Monoid$-graphs that are not connected, because every such $\str{G}$ can be turned into connected one 
by adding new edges connecting individual components without introducing new cycles.

\medskip
\paragraph{(\ref{cor:suaer1:4})}
Given $\str{A},\str{B}_1,\str{B}_2\in \mathcal M_\Monoid$ it is easy to see that
the free amalgamation of $\str{B}_1$ and $\str{B}_2$ over $\str{A}$ contains no
embedding of any non-metric cycle as described in the previous paragraph.
\end{proof}

\section {Archimedean monoids}
\label{sec:Mmetricarch}
In this section we use the machinery introduced in Section~\ref{sec:multiamalgamation} to show that for an archimedean monoid $\Monoid$ the class $\vv{\mathcal M}_\Monoid$ of all linearly ordered $\Monoid$-metric spaces is Ramsey.\footnote{In full generality, $\vv{\mathcal M}_\Monoid$ will only contain structures where the order is \emph{convex} (see Definition~\ref{defn:Js}). For archimedean monoids, these two notions coincide.} This is supposed to serve as a warm-up for Section~\ref{sec:Metric}, where we deal with general distance monoids and the means are considerably more difficult.

\begin{lemma}
\label{lem:archimedean}
Let $\EMonoid$ be an archimedean distance monoid. Then for every $a,b\in M$, $b\succ 0$ either $a\oplus b\succ a$ or $a$ is the maximal element of $\Monoid$.
\end{lemma}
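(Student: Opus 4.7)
The plan is to prove the contrapositive in a convenient form: assume $b\succ 0$ and that $a\oplus b$ is \emph{not} strictly greater than $a$, and deduce that $a$ is the $\mleq$-maximum of $\Monoid$. First I would observe that monotonicity of $\oplus$ (together with $b\mgeq 0$) gives $a\oplus b\mgeq a\oplus 0 = a$, so ``not $a\oplus b\succ a$'' is the same as the equation $a\oplus b = a$. This is the identity I will exploit.

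Next I would show by a trivial induction on $n\geq 1$ that $a\oplus(n\times b) = a$: the base case is the assumption, and the inductive step follows from associativity as
\[
a\oplus\bigl((n+1)\times b\bigr) \;=\; \bigl(a\oplus(n\times b)\bigr)\oplus b \;=\; a\oplus b \;=\; a.
\]
Now I invoke the archimedean hypothesis: given an arbitrary $c\in M$, if $c = 0$ then $c\mleq a$ is immediate, and if $c\succ 0$ then since also $b\succ 0$ there exists $n>0$ with $c\mleq n\times b$. Monotonicity of $\oplus$ then yields
\[
a\oplus c \;\mleq\; a\oplus (n\times b) \;=\; a.
\]
Combining this with $c = 0\oplus c \mleq a\oplus c$ (again by monotonicity) gives $c\mleq a$. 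Since $c$ was arbitrary, $a$ is the maximal element of $\Monoid$, which is what we needed.

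I do not anticipate a real obstacle: the statement is essentially a direct marriage of the archimedean condition with monotonicity, and the only mildly non-trivial step is recognising that one should first upgrade the single equation $a\oplus b=a$ to the family of equations $a\oplus(n\times b)=a$ in order to be able to apply the archimedean inequality $c\mleq n\times b$.
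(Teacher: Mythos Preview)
Your proof is correct and follows essentially the same route as the paper: both arguments first note that $a\oplus b\not\succ a$ forces $a\oplus b=a$ by monotonicity, then inductively upgrade this to $a\oplus(n\times b)=a$ for all $n$, and finally invoke the archimedean property to compare $a$ with an arbitrary (resp.\ strictly larger) $c$. The only cosmetic difference is that the paper phrases it as a proof by contradiction (assume $a$ is not maximal, pick $c\succ a$, and derive $n\times b\succ a$ while also $n\times b\mleq a\oplus(n\times b)=a$), whereas you prove directly that every $c$ satisfies $c\mleq a$; the underlying idea is identical.
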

\begin{proof}
Assume the contrary and consider $a,b$ such that $a\oplus b=a$ and $a$ is not the maximal element of $\Monoid$.
 In this case also $a\oplus (n\times b)=a$ for every $n$.  Because $a$ is not maximal element there exists $c\in \Monoid$ such that $a\mlt c$.
 Because $\Monoid$ is archimedean
we however know that there is $n$ such that $n\times b\succeq c\succ a$. A contradiction with monotonicity of $\oplus$.
\end{proof}

The following lemma is the basic tool used to show local finiteness condition needed by
Theorem~\ref{thm:mainstrongclosures}.

\begin{lemma}
\label{lem:nonSmetric}
Let $\EMonoid$ be an archimedean distance monoid and let $S\subseteq M$ be finite. Then there exists $n=n(S)$ such that for every non-$\Monoid$-metric cycle $\str{C}$ such that all distances in $\str{C}$ are from $S$ it holds that $\str{C}$ has at most $n$ vertices.
\end{lemma}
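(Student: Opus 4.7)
The plan is to reduce the problem to a direct application of the archimedean property, using only monotonicity of $\oplus$. Since $S$ is finite, let $M_S$ be the largest element of $S$ under $\mleq$, and, assuming $S$ contains a nonzero element (the case $S\subseteq\{0\}$ is vacuous, since every edge in a cycle has nonzero distance), let $m_S$ be the smallest element of $S\setminus\{0\}$. The archimedean property applied with $r=m_S$ and $s=M_S$ yields a positive integer $n_0$ such that $M_S\mleq n_0\times m_S$. I would take $n(S):=n_0$.

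Suppose $\str{C}=v_1,v_2,\ldots,v_\ell$ is a non-$\Monoid$-metric cycle with all distances in $S$, so by Proposition~\ref{prop:cycles}(\ref{cor:suaer1:2}),
\[ d(v_1,v_\ell)\mgt d(v_1,v_2)\oplus d(v_2,v_3)\oplus\cdots\oplus d(v_{\ell-1},v_\ell). \]
Each edge distance lies in $S\setminus\{0\}$ (since $d(u,v)=0$ forces $u=v$), hence $d(v_i,v_{i+1})\mgeq m_S$ for every $1\leq i<\ell$. Iterated monotonicity of $\oplus$ then gives
\[ d(v_1,v_2)\oplus\cdots\oplus d(v_{\ell-1},v_\ell)\mgeq (\ell-1)\times m_S. \]
Chaining with $d(v_1,v_\ell)\mleq M_S\mleq n_0\times m_S$ yields $(\ell-1)\times m_S\mlt n_0\times m_S$. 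A further application of monotonicity rules out $\ell-1\geq n_0$: otherwise $(\ell-1)\times m_S=\bigl((\ell-1-n_0)\times m_S\bigr)\oplus(n_0\times m_S)\mgeq n_0\times m_S$, a contradiction. Hence $\ell\leq n_0=n(S)$.

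I expect no genuine obstacle here: the archimedean hypothesis is tailor-made to bound how many copies of the smallest positive element of $S$ one can stack before surpassing $M_S$, and monotonicity transfers that bound to arbitrary edge-length distributions from $S$. Lemma~\ref{lem:archimedean} is not strictly needed for this argument, though it reflects the same underlying intuition that small positive increments cannot be absorbed in an archimedean monoid. The only mild subtlety is the degenerate case $S\subseteq\{0\}$, which is handled by observing that no non-$\Monoid$-metric cycle with such distances exists at all.
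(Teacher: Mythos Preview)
Your proof is correct and follows essentially the same approach as the paper's: both take $n(S)$ to be the least integer with $M_S\mleq n(S)\times m_S$ (the paper writes this as $\max\{m(a,b):a,b\in S\}$, which equals your $n_0$ by monotonicity), then bound the short side of the cycle below by $(\ell-1)\times m_S$ and the long edge above by $M_S$. The only cosmetic difference is that the paper cites Lemma~\ref{lem:archimedean} for the key inequality, whereas you derive it directly from monotonicity of $\oplus$; as you note, Lemma~\ref{lem:archimedean} is not actually needed here.
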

\begin{proof}
Because $\Monoid$ is archimedean, for every $a,b\in \Monoid$ there exists a smallest $m=m(a,b)$ such that $a\mleq m\times b$. Let $n = \max\left\{m(a,b); a,b\in S\right\}$. Then by Lemma~\ref{lem:archimedean} it follows for every $a_1, a_2, \ldots, a_n \in S$ that $a_1\oplus a_2 \oplus \cdots \oplus a_n$ is at least the largest element in $S$ and hence if $\str{C}$ has at least $n+1$ vertices, it cannot be non-$\Monoid$-metric.
\end{proof}

Now we are ready to show that the class of all finite $\Monoid$-metric spaces
has Ramsey expansion.

\begin{theorem}
\label{thm:Smetricarch}
Let $\EMonoid$ be an archimedean distance monoid.
Then the class of all finite $\Monoid$-metric spaces with free, i.e.~arbitrary, ordering of vertices, $\vv{\mathcal M}_\Monoid$, is a
Ramsey class.
\end{theorem}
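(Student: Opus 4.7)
The plan is to invoke Theorem~\ref{thm:mainstrongclosures} with $\K = \vv{\mathcal M}_\Monoid$, working in the language $L = L_\Monoid \cup \{\leq\}$. For the ambient Ramsey class $\mathcal R$ of irreducible structures, I take the class of all finite linearly ordered complete $L_\Monoid$-structures, i.e.~linearly ordered complete graphs with each edge labelled by some element of $M^{\succ 0}$. Every such structure is irreducible (every pair of distinct vertices lies in some $\rel{}{s}$), and that $\mathcal R$ is Ramsey follows from Theorem~\ref{thm:NRclosures} applied to the free amalgamation class of all finite $\Monoid$-graphs, whose irreducible members are precisely the complete edge-labelled ones. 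Since every finite $\Monoid$-metric space is a complete $\Monoid$-graph, we have $\vv{\mathcal M}_\Monoid \subseteq \mathcal R$.

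It remains to verify the three conditions of Definition~\ref{def:multiamalgamation}. The hereditary property is immediate. The strong amalgamation property follows from Proposition~\ref{prop:cycles}(\ref{cor:suaer1:4}) at the unordered level together with any interleaving of the orders on $\str{B}_1 \setminus \str{A}$ and $\str{B}_2 \setminus \str{A}$ that agrees with the given orders on $\str{B}_1$ and $\str{B}_2$; concretely, first form the free $\Monoid$-graph amalgamation, then apply the shortest-path completion of Proposition~\ref{prop:cycles}(\ref{cor:suaer1:1}), then extend the induced partial order linearly.

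The core of the plan is the locally finite completion property. Given $\str{B} \in \vv{\mathcal M}_\Monoid$ and $\str{C}_0 \in \mathcal R$, let $S \subseteq M^{\succ 0}$ be the finite set of distance values appearing in $\str{C}_0$, and set $n = n(S)$ provided by Lemma~\ref{lem:nonSmetric}. Suppose $\str{C}$ is an $L$-structure satisfying conditions (a)--(c) of Definition~\ref{def:multiamalgamation}(\ref{cond:completion}) for this $n$. Because $\str{C}_0$ is a completion of $\str{C}$, every distance in $\str{C}$ lies in $S$. If $\str{C}$ contained a non-$\Monoid$-metric cycle then, by Proposition~\ref{prop:cycles}(\ref{cor:suaer1:3}), it would contain one as an induced substructure, and by Lemma~\ref{lem:nonSmetric} that cycle would have at most $n$ vertices; but such a cycle admits no $\vv{\mathcal M}_\Monoid$-completion by Proposition~\ref{prop:cycles}(\ref{cor:suaer1:2}), contradicting condition (c). Hence $\str{C}$ is an $\Monoid$-metric graph, and its shortest-path completion (Proposition~\ref{prop:cycles}(\ref{cor:suaer1:1})), equipped with the linear order inherited from $\str{C}$, lies in $\vv{\mathcal M}_\Monoid$ and is the desired completion; it preserves all copies of $\str{B}$, since each such copy already has all its distances defined in $\str{C}$ and is thus left untouched by the shortest-path procedure.

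The essential content is thereby carried by Proposition~\ref{prop:cycles} and Lemma~\ref{lem:nonSmetric}, so no genuine obstacle remains beyond bookkeeping once the multiamalgamation framework is set up. The archimedean hypothesis enters solely through Lemma~\ref{lem:nonSmetric}, which bounds the size of obstructing non-metric cycles by a finite quantity depending only on the finite distance set $S$; without it, arbitrarily long non-metric cycles may arise (as in the non-archimedean example with infinitesimals), and the more delicate argument of Section~\ref{sec:Metric} using convex orderings becomes necessary.
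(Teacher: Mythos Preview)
Your proposal is correct and follows essentially the same approach as the paper: both verify that $\vv{\mathcal M}_\Monoid$ is an $\mathcal R$-multiamalgamation class (with $\mathcal R$ the ordered complete $\Monoid$-graphs), using Proposition~\ref{prop:cycles}(\ref{cor:suaer1:4}) for strong amalgamation and Lemma~\ref{lem:nonSmetric} to bound the size of obstructing non-metric cycles in terms of the finite distance set $S$ of $\str{C}_0$. Your write-up is in fact slightly more explicit than the paper's in justifying why $\mathcal R$ is Ramsey and how the order is handled; the only small imprecision is that the relation $\leq_\str{C}$ need not itself be linear, only extendible to a linear order via the homomorphism-embedding into $\str{C}_0$, which the paper covers with the remark that ``the order can be completed arbitrarily.''
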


\begin{proof}
We will show that $\vv{\mathcal M}_\Monoid$ is an $\mathcal R$-multiamalgamation class, where $\mathcal R$ is the class of all finite linearly ordered complete $\Monoid$-graphs.

Let $\str{C}_0$ be an arbitrary finite linearly ordered $\Monoid$-graph. We will show that there exists an $n=n(\str{C}_0)$ satisfying the following:

Let $\str{C}$ be a $\Monoid$-graph with an additional binary relation $\leq_\str{C}$ which has a homomorphism-embedding to $\str{C}_0$. Further suppose that every substructure of $\str{C}$ on at most $n$ vertices is $\Monoid$-metric. Then $\str{C}$ is $\Monoid$-metric.

If we show that, we thereby check the conditions of Theorem~\ref{thm:mainstrongclosures} (the order can be completed arbitrarily, the strong amalgamation property is given by Proposition~\ref{prop:cycles} (\ref{cor:suaer1:4}) and remaining assumptions are trivial).

Let $S$ be the set of distances which appear in $\str{C}_0$. As $\str{C}_0$ is finite, $S$ is clearly finite, too. Take $n=n(S)$ from Lemma~\ref{lem:nonSmetric}. Then every non-metric cycle has at most $n$ vertices. And as every non-$\Monoid$-metric graph contains a non-$\Monoid$-metric cycle by Proposition~\ref{prop:cycles} (\ref{cor:suaer1:3}), the statement follows.
\end{proof}

\section {General distance monoids}
\label{sec:Metric}
In this section we generalise the construction to distance monoids in full
generality.  In particular, unlike~\cite{Conant2015} we do not need the notion
of semi-archimedean monoids. (A distance monoid $\EMonoid$ is {\em
semi-archimedean} if, for all $r, s \in M$, $r,s\mgt 0$, if $n\times r\mlt s$
for all $n>0$ then $r\oplus s=s$.)

The main difficulties in generalizing Theorem~\ref{thm:Smetricarch} come from
the fact that there is no direct equivalent to Lemma~\ref{lem:nonSmetric}.
Consider, for example, spaces with distances $1$ and $3$. Every such metric
space consists of a disjoint union of balls of diameter 1 separated by distance
3. Every cycle having one distance $3$ and rest of distances $1$ is
forbidden regardless of the number of its vertices.

To overcome this problem we need to precisely characterise definable equivalences
in $\Monoid$-metric spaces (i.e.~formulas $\varphi(x,y)$ such that in the \Fraisse{} limit $\mathcal U$
the relation $x\sim y\Leftrightarrow \mathcal U\vDash\varphi(x,y)$ is an equivalence relation)
and represent them by means of artificial vertices
and functions. In the language of model theory, we are going to eliminate
imaginaries, see~\cite{Hubicka2016} for details.

\subsection {Blocks and block equivalences}
As we will show, the definable equivalences are related to archimedean submonoids of
$\Monoid$-metric spaces. 
The following is a generalization of a definition by Sauer~\cite{Sauer2013}.
\begin{definition}
Given a distance monoid $\EMonoid$, a {\em block} $\Block$ of $\Monoid$ is a subset of $M$ such that either
\begin{enumerate}
\item $\Block=\{0\}$, or
\item $0\notin \Block$ and $\{0\}\cup \Block$ induces a maximal archimedean submonoid of $\Monoid$.
\end{enumerate}
Given a block $\Block$ we will denote by $\Monoid_\Block$ the archimedean submonoid induced by it.
\end{definition}
The basic properties of blocks can be summarized as follows.
\begin{lemma}
\label{lem:blocks}
Given a distance monoid $\EMonoid$ it holds that:
\begin{enumerate}
\item\label{lem:blocks:unique} For every $a\in M$ there exists a unique block $\Block_a$ containing $a$. 
\item\label{lem:blocks:sameblock} Let $a,b\in \Monoid$. If there exist $m, n$ such that $m\times a \mgeq b$ and $n\times b \mgeq a$, then $a,b$ are in the same block.
\end{enumerate}
\end{lemma}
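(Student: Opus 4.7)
The plan is to introduce on $M\setminus\{0\}$ the relation $a\sim b$ iff there exist positive integers $m,n$ with $m\times a\mgeq b$ and $n\times b\mgeq a$, verify that this is an equivalence relation, and show that its equivalence classes (each adjoined with $\{0\}$) are precisely the maximal archimedean submonoids of $\Monoid$. Statement (\ref{lem:blocks:sameblock}) then becomes tautological since its hypothesis is literally $a\sim b$, and statement (\ref{lem:blocks:unique}) follows from the disjointness of equivalence classes (the block $\{0\}$ being handled separately).

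Reflexivity ($1\times a\mgeq a$) and symmetry are trivial. For transitivity, if $m\times a\mgeq b$ and $p\times b\mgeq c$, then monotonicity of $\oplus$ gives $(mp)\times a = p\times(m\times a)\mgeq p\times b\mgeq c$, and symmetrically in the other direction. The next step is to check that for an equivalence class $E$, the set $\Monoid_E := \{0\}\cup E$ is closed under $\oplus$. Take $a,b\in E$ and pick $m$ with $m\times a\mgeq b$. Monotonicity yields
\[
(m+1)\times a = (m\times a)\oplus a \mgeq b\oplus a = a\oplus b,
\]
while $1\times(a\oplus b)=a\oplus b\mgeq a\oplus 0=a$; these two inequalities certify $a\sim(a\oplus b)$, so $a\oplus b\in E$ (and $a\oplus b\neq 0$ because $a\oplus b\mgeq a\mgt 0$). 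Thus $\Monoid_E$ is a submonoid, and it is archimedean by the very definition of $\sim$.

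For maximality, suppose an archimedean submonoid $\Monoid'\supseteq\Monoid_E$ contains some nonzero $c$. Fix any $a\in E$; applying the archimedean property of $\Monoid'$ to $a,c\mgt 0$ yields positive integers $m,n$ with $m\times a\mgeq c$ and $n\times c\mgeq a$, i.e.\ $c\sim a$, so $c\in E$ and $\Monoid'=\Monoid_E$. This establishes that every equivalence class gives rise to a maximal archimedean submonoid, and conversely every maximal archimedean submonoid arises in this way (its nonzero part must be an entire $\sim$-class, again because the archimedean property forces its elements to be pairwise $\sim$-related). Since the $\sim$-classes partition $M\setminus\{0\}$, each nonzero $a$ lies in a unique block, proving (\ref{lem:blocks:unique}); the element $0$ lies only in the trivial block $\{0\}$ by the defining dichotomy.

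I expect no serious obstacle: the only mildly delicate point is the closure of $\Monoid_E$ under $\oplus$, which is nailed down by the identity $(m+1)\times a = (m\times a)\oplus a$ together with the monotonicity axiom of Definition~\ref{def:DM}(3). All remaining bookkeeping (the special status of $0$, and the equivalence between ``maximal archimedean submonoid'' and ``$\sim$-class $\cup\,\{0\}$'') is routine.
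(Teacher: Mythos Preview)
Your proposal is correct and follows essentially the same approach as the paper: the paper defines $\Block_a = \{b \in M : (\exists n)(n\times a \mgeq b) \land (\exists n)(n\times b \mgeq a)\}$, asserts that $\Block_a\cup\{0\}$ is a maximal archimedean submonoid with the details left as ``easy to check,'' and notes that (\ref{lem:blocks:sameblock}) follows from the proof of (\ref{lem:blocks:unique}). You have simply spelled out those details explicitly by framing $\sim$ as an equivalence relation and verifying closure, the archimedean property, and maximality.
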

\begin{proof}
(\ref{lem:blocks:unique}) Let $$\Block_a = \left\{b \in \Monoid;  (\exists n)(n\times a \mgeq b) \land (\exists n)(n\times b \mgeq a)\right\}.$$ It is easy to check that $\Monoid_{\Block_a}=(\Block_a\cup \{0\},\oplus,0,\mleq)$ is an archimedean submonoid of $\Monoid$. The maximality and uniqueness follows from the fact that no $b\in M\setminus (\Block_a\cup \{0\})$ can be in the same archimedean submonoid as $a$.

(\ref{lem:blocks:sameblock}) follows from the proof of (\ref{lem:blocks:unique}).
\end{proof}
Note that the relation $$R\subset M^2 = \left\{(a,b)\in M^2;  (\exists n)(n\times a \mgeq b) \land (\exists n)(n\times b \mgeq a)\right\}$$ used in the proof is an equivalence relation on $M$ whose equivalence classes are precisely the blocks.

Given a distance monoid $\EMonoid$ and $a\in M$, we will always denote by $\Block_a$ the unique block of $\Monoid$ containing $a$ given by Lemma~\ref{lem:blocks}~(\ref{lem:blocks:unique}).

\begin{lemma}\label{lem:blockorder}
Let $\EMonoid$ be a distance monoid and $a,b,c\in M$ such that $a,c\in\Block$ and $a\mlt b \mlt c$, then $b\in \Block$.
\end{lemma}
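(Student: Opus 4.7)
The plan is to reduce this directly to Lemma \ref{lem:blocks}~(\ref{lem:blocks:sameblock}), which says that if $a, b$ are mutually bounded above by integer multiples of each other then they lie in a common block. Since blocks are unique by Lemma \ref{lem:blocks}~(\ref{lem:blocks:unique}), showing that $a$ and $b$ satisfy this mutual bounding relation forces $b \in \Block_a = \Block$.

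First I would dispose of the trivial case $\Block = \{0\}$: the hypothesis $a \prec b \prec c$ forces $a \neq c$, whereas $a, c \in \{0\}$ would force $a = c = 0$. Hence $\Block$ is a nontrivial block, so $0 \notin \Block$ and in particular $a \succ 0$ and $c \succ 0$.

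Next I would produce the two integer witnesses for $(a,b)$. For an $n$ with $n \times b \mgeq a$, the choice $n = 1$ works, since $a \mlt b$ gives $b \mgeq a$. For an $m$ with $m \times a \mgeq b$, I use the assumption $a, c \in \Block$: by Lemma \ref{lem:blocks}~(\ref{lem:blocks:sameblock}) (or directly from the definition of an archimedean submonoid) there is some $m$ with $m \times a \mgeq c$, and then $b \mlt c$ yields $m \times a \mgeq c \mgeq b$.

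Applying Lemma \ref{lem:blocks}~(\ref{lem:blocks:sameblock}) to $a$ and $b$ then shows that $b$ lies in the same block as $a$, and this block is $\Block$ by uniqueness. There is no real obstacle here; the lemma is essentially a convexity statement for blocks and the work has already been done in setting up Lemma \ref{lem:blocks}.
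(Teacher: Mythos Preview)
Your proof is correct and follows essentially the same approach as the paper: use that $a,c\in\Block$ gives some $m$ with $m\times a\mgeq c\mgeq b$, combine with the trivial bound $1\times b\mgeq a$, and invoke Lemma~\ref{lem:blocks}~(\ref{lem:blocks:sameblock}). Your version is more explicit (you handle the degenerate case $\Block=\{0\}$ and spell out both bounds), whereas the paper's proof is terser and only writes down the nontrivial direction.
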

\begin{proof}
Take any $a,c\in\Block$ and $b\in\Block'$. As $a,c\in\Block$, there is $n$ such that $n\times a\mgeq c$. But then also $n\times a\mgeq b$ and hence by Lemma~\ref{lem:blocks}~(\ref{lem:blocks:sameblock}) $a$ and $b$ are in the same block.
\end{proof}
This means that in the order $\mleq$ blocks form intervals and hence $\mleq$ induces a linear order of blocks of $\Monoid$. We will denote this order by the same symbol $\mleq$ (namely we say $\Block \mleq \Block'$ if for every $a\in \Block, b\in\Block'$ it holds that $a\mleq b$).

\begin{definition}
Let $\str{A}$ be an $\Monoid$-metric space and $\Block$ block of $\Monoid$. 

\begin{enumerate}
\item A \emph{block equivalence $\sim_{\Block}$}
on vertices of $\str{A}$ is given by $u\sim_{\Block} v$ whenever there exists $a\in \Block$ such that $d(u,v)\mleq a$.

\item A {\em ball of diameter $\Block$} in $\str{A}$ is any equivalence class of $\sim_\Block$ in $\str{A}$.
\end{enumerate}
\end{definition}
Note that a block $\Block$ does not need to contain maximal element.

To verify that for every block $\Block$ the relation $\sim_{\Block}$ is indeed an equivalence relation it suffices to check transitivity.  Given a triangle with distances $a,b,c$, if there exists $a'\in \Block$ such that $a\preceq a'$ and $b'\in \Block$ such that $b\preceq b'$ it also holds that $c\preceq a\oplus b\preceq a'\oplus b'\in \Block$.

\medskip

Note that, in an $\Monoid$-metric space, there can be many types of pairs of balls of the same diameter. For example, consider monoid $\Monoid$ given by Example~\ref{example1} (\ref{ex:Smetric}) for $S=\{1,3,5\}$. $\Monoid$ has three blocks: $\{0\}$, $\{1\}$ and $\{3,5\}$. If there are vertices $u,v$ such that $d(u,v)=3$ then in fact for every pair of vertices $u',v'$ such that $u\sim_{\{1\}} u'$ and $v\sim_{\{1\}} v'$ it holds that $d(u',v')=3$. In other words, there are two types of pairs balls of diameter $1$ in $\Monoid$-metric spaces, those in distance 3 and those in distance 5.

To formalize this, for block $\Block$ and a distance $\ell\in \Monoid$ such that $\Block_\ell \mgt \Block$ we denote by $t(\Block,\ell)$ the set of all distances which can appear between two balls of diameter $\Block$ provided that $\ell$ appears there. (So, in the previous example, we have $t(\{1\}, 3) = \{3\}$ and $t(\{1\}, 5) = \{5\}$.) We will call the sets $t(\Block,\ell)$ \emph{block-types}.

\begin{observation}\label{obs:blocktypes}
The following holds about $t(\Block,\ell)$:
\begin{enumerate}
\item $\ell\in t(\Block,\ell)$,
\item $t(\Block,\ell)\subseteq \Block_\ell$,
\item either $t(\Block,\ell) = t(\Block,\ell')$ or $t(\Block,\ell) \cap t(\Block,\ell') = \emptyset$, and
\item if $\Block'\mgeq \Block$ then $t(\Block',\ell)\supseteq t(\Block, \ell)$.
\end{enumerate}
\end{observation}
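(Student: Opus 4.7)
The statement breaks into four parts of varying difficulty, and I would address them in increasing order. Parts (1) and (4) are essentially immediate. Part (1) holds by definition of $t(\Block,\ell)$, since any witnessing configuration realizes $\ell$ between the two balls. For (4), note that when $\Block'\mgeq\Block$, every $\sim_\Block$-equivalence class is contained in a single $\sim_{\Block'}$-class, so a configuration witnessing $\ell'\in t(\Block,\ell)$ also witnesses $\ell'\in t(\Block',\ell)$; the endpoints of $\ell$ and $\ell'$ remain in distinct $\Block'$-balls since $\Block_\ell\mgt\Block'\mgeq\Block$ means $\ell$ and $\ell'$ cannot be bounded by any element of $\Block'$.

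For part (2), I would fix a witnessing configuration: $\Block$-balls $B, B'$ with $u,u'\in B$ and $v,v'\in B'$ satisfying $d(u,v)=\ell$ and $d(u',v')=\ell'$. Applying the triangle inequality along $u'\to u\to v\to v'$ and along its reverse yields $a,b\in\Block$ with $\ell'\mleq a\oplus\ell\oplus b$ and $\ell\mleq a\oplus\ell'\oplus b$. Since $\Block\mlt\Block_\ell$ we have $a,b\mlt\ell$, and the first inequality gives $\ell'\mleq 3\times\ell$. For the matching direction I would case-split on the block of $\ell'$: if $\ell'$ lay in $\Block$ or any smaller block, then $a\oplus\ell'\oplus b$ would itself lie in $\Block$ (by closure of the archimedean submonoid $\Monoid_\Block$, together with Lemma~\ref{lem:blockorder}), contradicting $\ell\in\Block_\ell\mgt\Block$. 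Hence $\ell'$ sits in a block strictly above $\Block$, so $\ell'\mgt a,b$ and thus $\ell\mleq 3\times\ell'$. Combined with $\ell'\mleq 3\times\ell$, Lemma~\ref{lem:blocks}~(\ref{lem:blocks:sameblock}) forces $\ell'\in\Block_\ell$.

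Part (3) is, I expect, the main obstacle, being the only step that genuinely uses the amalgamation machinery. Suppose $\ell''\in t(\Block,\ell)\cap t(\Block,\ell')$, witnessed by $\Monoid$-metric spaces $\str{A}_1$ (with $\Block$-balls realizing $\ell$ and $\ell''$) and $\str{A}_2$ (with $\Block$-balls realizing $\ell'$ and $\ell''$). The plan is to take the strong amalgamation of $\str{A}_1$ and $\str{A}_2$ over the two vertices realizing the shared distance $\ell''$; by Proposition~\ref{prop:cycles}~(\ref{cor:suaer1:4}) the resulting free amalgam is again $\Monoid$-metric. The crucial verification is that the two $\Block$-balls sharing the $\ell''$-endpoint on each side merge into a single $\sim_\Block$-class in the amalgam: by the shortest path completion of Section~\ref{sec:metric}, any two points in the union have distance bounded by a sum of two elements of $\Block$ via the shared vertex, and such a sum lies in $\Block$ by closure of $\Monoid_\Block$. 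Hence the amalgam contains a single pair of $\Block$-balls realizing all of $\ell,\ell',\ell''$, giving $\ell'\in t(\Block,\ell)$. Iterating this gluing construction with any $\ell'''\in t(\Block,\ell)$ produces a configuration realizing $\ell'$ and $\ell'''$ simultaneously between a common pair of $\Block$-balls, which proves $t(\Block,\ell)\subseteq t(\Block,\ell')$; the reverse inclusion is symmetric.
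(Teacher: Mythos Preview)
Your argument is correct. The paper provides no proof of this statement at all---it is recorded as an ``Observation'' and left entirely to the reader---so there is nothing to compare against. Your treatment of (2) via the triangle-inequality sandwich $\ell'\mleq a\oplus\ell\oplus b$, $\ell\mleq a\oplus\ell'\oplus b$ together with Lemma~\ref{lem:blockorder} is the natural argument, and your handling of (3) by amalgamating two witnessing configurations over the shared $\ell''$-edge (Proposition~\ref{prop:cycles}~(\ref{cor:suaer1:4})) and verifying that the corresponding $\Block$-balls merge is exactly what is needed. A minor streamlining: since the relation $\ell''\in t(\Block,\ell)$ is visibly symmetric in $\ell$ and $\ell''$, your gluing step already proves transitivity, after which (3) is just the statement that equivalence classes are disjoint; this lets you avoid the second ``iterated gluing'' pass.
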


\subsection {Important and unimportant summands}\label{subsec:nonimportant}
This rather technical part is the key to obtaining a locally finite
description of $\mathcal{M}_\Monoid$ (needed for Theorem~\ref{thm:mainstrongclosures}).

Given $\EMonoid$ and $S\subseteq M$, we will denote by $S^\oplus$ the set of all values which can be
obtained as nonempty sums of values in $S$. (Thus $S^\oplus\cup \{0\}$ forms the submonoid of $\Monoid$ generated by $S$.)

Blocks of a monoid may be infinite and may not contain a maximal element which
would be useful in our arguments (those maximal elements are referred to as jump numbers in~\cite{Hubicka2016}).  For a fixed finite $S\subseteq M$ we seek
a sufficient approximation $\mus(\Block,S)$ of the maximal element of $\Block$ (which might not exist) given by the
following lemma.  The name $\mus$ means \emph{maximum useful} distance (with
respect to $\Block$ and $S$).
\begin{lemma}\label{lem:obstaclemus}
Let $\EMonoid$ be a distance monoid with finitely many blocks and $S\subseteq M$ be a finite subset of $M$. 
 Then for every nonzero block $\Block$ of $\Monoid$ there is a distance $\mus(\Block, S) \in \Block$ such that for every $\ell\in S$ and $e\in S^\oplus$ one of the following holds:
\begin{enumerate}
\item $e\oplus \mus(\Block, S) \mgeq \ell$, or
\item $e\oplus b\mlt \ell$ for every $b \in \Block$ (and thus also for every $b\in\Block'$, where $\Block'\mleq \Block$).
\end{enumerate}
\end{lemma}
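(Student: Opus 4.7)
The plan is to construct $\mus(\Block,S)$ as the maximum in $\Block$ of a finite collection of ``useful'' elements that I extract from the interaction of $S$ with $\Block$ and with the larger blocks of $\Monoid$. I would first dispose of the easy cases based on the relative position of $\Block$ and $\Block_\ell$. If $\Block\mgt\Block_\ell$ then every $b\in\Block$ satisfies $b\mgt\ell$, so option~(1) is immediate for any choice of $\mus\in\Block$. If $\Block=\Block_\ell$ then $\ell\in\Block$, and taking $\mus\mgeq \max(S\cap\Block)$ (which lies in $\Block$) gives $e\oplus\mus\mgeq\mus\mgeq\ell$, again option~(1). The substantive case is therefore $\Block\mlt\Block_\ell$.

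For the substantive case, the central finiteness observation is that for each $\ell\in S$ the set $E_\ell:=\{e_I\in(S\cap\Block_\ell)^\oplus : e_I\mleq\ell\}$ is finite. Indeed, letting $r=\min(S\cap\Block_\ell)\in\Block_\ell$, the archimedeanity of the submonoid $\Monoid_{\Block_\ell}$ produces $N$ with $N\times r\mgt\ell$; since any $k$-fold sum in $E_\ell$ is $\mgeq k\times r$, this bounds $k\leq N$. I then collect candidate witnesses: for each $\ell\in S$ and each $e_I\in E_\ell$ for which some $b\in\Block$ satisfies $e_I\oplus b\mgeq\ell$, I fix one such $b(\ell,e_I)\in\Block$. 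Setting $\mus(\Block,S)$ to be the maximum in $\Block$ of all these finitely many candidates together with $\max(S\cap\Block)$ gives a well-defined element of $\Block$.

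The verification then splits according to $\Block_e$. If $\Block_e\mgt\Block_\ell$ or $e\mgeq\ell$, option~(1) is trivial. If $\Block_e\mlt\Block_\ell$, then $e\oplus b$ lies in $\max(\Block_e,\Block)\mlt\Block_\ell$ for every $b\in\Block$, so $e\oplus b\mlt\ell$ and option~(2) holds. In the remaining case $\Block_e=\Block_\ell$ with $e\mlt\ell$, I decompose $e=e_I\oplus e_J$ with $e_I\in E_\ell$ nonempty and $e_J$ a (possibly empty) sum of elements of $S$ in strictly smaller blocks. If $e_I$ contributed a candidate witness $b(\ell,e_I)$, then by commutativity, associativity, and monotonicity $e\oplus\mus = (e_I\oplus\mus)\oplus e_J\mgeq e_I\oplus\mus\mgeq e_I\oplus b(\ell,e_I)\mgeq\ell$, yielding option~(1).

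The main obstacle is the last sub-case, where $e_I$ contributed no witness (i.e.\ $e_I\oplus b\mlt\ell$ for every $b\in\Block$), and I must verify option~(2) for the given $e=e_I\oplus e_J$, namely that $e\oplus b\mlt\ell$ for all $b\in\Block$. I would prove this by showing that the ``no-$\Block$-witness'' property for $e_I$ propagates through the finitely many blocks strictly between $\Block$ and $\Block_\ell$. Concretely, using associativity and commutativity one can absorb all summands of $e_J$ that lie in blocks $\mleq\Block$ together with $b$ into a single element $b^\ast\in\Block$, reducing $e\oplus b$ to $e_I'\oplus e_J^{>\Block}$ where $e_I'=e_I\oplus b^\ast\in\Block_\ell\cap[e_I,\ell)$ inherits the no-$\Block$-witness property (since $e_I'\oplus b=e_I\oplus(b^\ast\oplus b)\mlt\ell$). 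An induction on the finite chain of blocks between $\Block$ and $\Block_\ell$, applying this same absorption trick one block at a time, closes the argument. This is the point where the finitely-many-blocks hypothesis of the lemma is essential: it bounds the length of the inductive chain.
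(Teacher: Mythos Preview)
Your construction of $\mus(\Block,S)$ has a genuine gap: the set $E_\ell=\{e_I\in(S\cap\Block_\ell)^\oplus:e_I\mleq\ell\}$ is too small. You only collect $\Block$-witnesses for partial sums built entirely from $S\cap\Block_\ell$, but $e\in S^\oplus$ may also contain summands from intermediate blocks strictly between $\Block$ and $\Block_\ell$, and those summands can push $e$ close enough to $\ell$ that a small $b\in\Block$ finishes the job---without any $e_I\in E_\ell$ having a $\Block$-witness. Concretely, take $M=\mathbb Z_{\geq 0}^3$ with componentwise addition and lexicographic order; the nonzero blocks are $\Block_1=\{(0,0,n):n\geq 1\}$, $\Block_2=\{(0,m,n):m\geq 1\}$, $\Block_3=\{(k,m,n):k\geq 1\}$. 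Let $S=\{(0,0,1),(0,1,0),(1,0,0),(1,1,5)\}$ and $\ell=(1,1,5)$. Then $E_\ell=\{(1,0,0),(1,1,5)\}$; the element $(1,0,0)$ has no $\Block_1$-witness, and the only witness recorded is for $(1,1,5)$, so your $\mus(\Block_1,S)=(0,0,1)$. But $e=(1,0,0)\oplus(0,1,0)=(1,1,0)\in S^\oplus$ satisfies $e\oplus(0,0,1)=(1,1,1)\mlt\ell$ (option~(1) fails) while $e\oplus(0,0,5)=(1,1,5)=\ell$ (option~(2) fails).

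Your proposed ``induction on the chain of blocks, applying the same absorption trick'' does not repair this. The absorption trick lets you fold summands from blocks $\mleq\Block$ into $b^\ast\in\Block$ and retain the no-$\Block$-witness property for $e_I'=e_I\oplus b^\ast$; but the residual $e_J^{>\Block}$ lives in blocks \emph{above} $\Block$, and no-$\Block$-witness says nothing about adding such elements. In the example, $e_J=(0,1,0)\in\Block_2$ is precisely what the trick cannot absorb. The paper's proof addresses exactly this point by building, for each $\ell$, an increasing finite chain of sets $X_i(\ell)$ that accumulate partial sums block by block (so that elements like $(1,1,0)$ appear at the appropriate stage), together with thresholds $d_i(\ell)$ chosen via $f(\Block_i,\ell,e)=\min\{a\in\Block_i:e\oplus a\mgeq\ell\}$ over $e\in X_{i-1}(\ell)$. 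The finiteness of each $X_i(\ell)$ is what makes this work, and it requires the inductive bound $d_i(\ell)$; your $E_\ell$ is essentially only the first step $X_1(\ell)$ of that construction.
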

Note that $\mus(\Block, S)$ is not necessarily unique. In particular, if a distance $x$ satisfies the conditions on $\mus(\Block, S)$, then so do all the distances in $\Block$ larger than $x$.
\begin{example}
It is always possible to put $\mus(\Block, S)$ to be the maximal element of $\Block$ if it exists. The choice may be more difficult for blocks with no maximal element.
To clarify this consider the monoid with infinitesimals given in Example~\ref{example1} (\ref{infini}). This monoid has three blocks.  $\Block_0=\{0\}$, $\Block_1$ consists of infinitesimals and $\Block_2$ of all remaining values. For $S=\{\mathrm{dx}, 1, 2+3\mathrm{dx}\}$ we can put  $\mus(\Block_1,S)=3\mathrm{dx}$ and $\mus(\Block_2,S)=1+3\mathrm{dx}$. Note that $\mus(\Block_1,S)=3\mathrm{dx} \notin S$.
\end{example}
\begin{proof}[Proof of Lemma~\ref{lem:obstaclemus}]
Let $S$ be a fixed finite subset of $M$.
Enumerate nonzero blocks of $\Monoid$ as $\Block_1\mgeq \Block_2 \mgeq \cdots \mgeq \Block_p$.

Given a block $\Block$ and distances $\ell, e \in M$, define $f(\Block, \ell, e)$ to be some (for example the smallest, if it exists) $a\in \Block$ such that $\ell\mleq e\oplus a$ or zero if $\ell \mgt e\oplus a$ for all $a\in\Block$. Further, given a block $\Block$ and a distance $e\in \Block$, define:
$$X(\Block, e) = \left\{ a\in \Block\cup\{0\}; a\mleq e \hbox{ and } \exists_{b_1,  \ldots, b_m \in \Block\cap S}\thinspace a=b_1\oplus\cdots\oplus b_m \right\}.$$
Here an empty sum equals to zero. Hence $0\in X(\Block, e)$ for every choice of $\Block$ and $e$.

Observe that $X(\Block, e)$ is finite for any choice of $\Block$ and $e$:
As $\Monoid_\Block$ is archimedean, we have for each $b\in \Block\cap S$ some finite $n(b)$ such that $n(b)\times b\mgeq e$. As $\Block\cap S$ is also finite and $\mleq$ is a congruence for $\oplus$, $X(\Block, e)$ is finite.

Now, for a given $\ell\in S$, we define by induction on $i$ the finite sets $X_i(\ell)$, $1\leq i\leq p$, and distances $d_i(\ell)$:
\begin{align*}
d_i(\ell) &= 
  \begin{cases} 
   \text{an arbitrary }a\in \Block_i  & \text{if } \Block_i\mgt \Block_\ell, \\
   \ell & \text{if } \Block_i = \Block_\ell, \\
   \max_{e\in X_{i-1}(\ell)} f(\Block_i, \ell, e) & \text{if } \Block_i\mlt \Block_\ell;
  \end{cases}\\
X_i(\ell) &= 
  \begin{cases} 
   \emptyset & \text{if } \Block_i\mgt \Block_\ell, \\
   X(\Block_\ell, d_\ell(\ell)) & \text{if }\Block_i = \Block_\ell, \\
   X_{i-1}(\ell) \oplus X(\Block_i, d_i(\ell)) & \text{if } \Block_i\mlt \Block_\ell;
  \end{cases}
\end{align*}
where $A\oplus B = \{a\oplus b; a\in A, b\in B\}$.

Note that $X_{i-1}(\ell) \subseteq X_i(\ell)$.

\begin{claim}\label{claim:obstaclemus:2}
The following two statements are true for every $1\leq i \leq p$ and $\ell\in S$:
\begin{enumerate}
\item \label{claim:obstaclemus:2:2} For every $e\in S^\oplus$ either $e\oplus d_i(\ell) \mgeq \ell$, or $e\oplus b\mlt \ell$ for every $b \in \Block_j$, where $j\geq i$.
\item \label{claim:obstaclemus:2:1} Let $e\in \left(S \cap \bigcup_{j\leq i} \Block_j\right)^\oplus$ be a distance such that $e\mlt \ell$ and there is $b\in\Block_j$, $j > i$ with $e\oplus b \mgeq \ell$. Then $e\in X_{i}(\ell)$.
\end{enumerate}
\end{claim}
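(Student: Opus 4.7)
The plan is to prove (1) and (2) simultaneously by induction on $i$, with a three-way case split at each step mirroring the piecewise definitions of $d_i(\ell)$ and $X_i(\ell)$. Crucially, (2) at level $i-1$ will be invoked to establish both (1) and (2) at level $i$.

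The two cases $\Block_i \mgt \Block_\ell$ and $\Block_i = \Block_\ell$ (which also cover the base case $i=1$) follow directly from the definitions. When $\Block_i \mgt \Block_\ell$, every element of $\Block_i$ already exceeds $\ell$, so (1) is automatic; and any nonempty sum of elements of $S \cap \bigcup_{j \leq i}\Block_j$ lies in a block $\mgeq \Block_i \mgt \Block_\ell$, which makes the hypothesis of (2) vacuous. When $\Block_i = \Block_\ell$, $d_i(\ell)=\ell$ gives (1) immediately, and in (2) the assumption $e \mlt \ell$ forces the summands of $e$ from blocks above $\Block_\ell$ to vanish, placing $e$ in $X(\Block_\ell,\ell)=X_i(\ell)$.

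The substantive case is $\Block_i \mlt \Block_\ell$, where $d_i(\ell)=\max_{e'\in X_{i-1}(\ell)} f(\Block_i,\ell,e')$ and $X_i(\ell)=X_{i-1}(\ell)\oplus X(\Block_i, d_i(\ell))$. The central manoeuvre is to decompose $e$ by blocks. For (1), write $e = e_{\leq i-1}\oplus e_{\geq i}$ with $e_{\leq i-1}$ the sum of summands in blocks strictly above $\Block_i$ and $e_{\geq i}$ the rest. Assuming the non-trivial alternative, so that some $b \in \bigcup_{j \geq i}\Block_j$ satisfies $e\oplus b \mgeq \ell$, the element $b'' := e_{\geq i}\oplus b$ also lies in $\bigcup_{j \geq i}\Block_j$ and witnesses $e_{\leq i-1}\oplus b'' \mgeq \ell$. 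The inductive (2) then places $e_{\leq i-1}$ in $X_{i-1}(\ell)$; bumping $b''$ up to some element of $\Block_i$ if necessary shows $f(\Block_i,\ell,e_{\leq i-1})\ne 0$, and the maximality in $d_i(\ell)$ yields $e_{\leq i-1}\oplus d_i(\ell)\mgeq \ell$, hence $e\oplus d_i(\ell)\mgeq \ell$. For (2), decompose $e=e_{\leq i-1}\oplus e^{(i)}$ with $e^{(i)}\in \Block_i\cup\{0\}$; if $e^{(i)}=0$ the conclusion comes from the inductive (2) applied to $e$ at level $i-1$ (using $e = e\oplus 0$ and $0\in X(\Block_i,d_i(\ell))$), and otherwise the same application places $e_{\leq i-1}\in X_{i-1}(\ell)$ (with witness $b''=e^{(i)}\oplus b \in \Block_i$), while the bound $e^{(i)}\mleq d_i(\ell)$ follows by contradiction: $e^{(i)} \mgt d_i(\ell) \mgeq f(\Block_i,\ell,e_{\leq i-1})$ would give, by monotonicity, $e \mgeq e_{\leq i-1}\oplus f(\Block_i,\ell,e_{\leq i-1}) \mgeq \ell$, contradicting $e\mlt \ell$.

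The main obstacle is the block-level bookkeeping in the case $\Block_i \mlt \Block_\ell$. One must repeatedly use that a sum of elements from different blocks lies in the largest block involved, and one must lift witnesses up to $\Block_i$ in order to ensure that $f(\Block_i,\ell,e_{\leq i-1})$ is non-zero. This non-vanishing is exactly what converts the maximality in the definition of $d_i(\ell)$ into the desired inequalities $e_{\leq i-1}\oplus d_i(\ell)\mgeq \ell$ for (1) and $e^{(i)}\mleq d_i(\ell)$ for (2).
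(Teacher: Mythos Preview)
Your proof is correct and follows essentially the same inductive scheme as the paper: simultaneous induction on $i$, trivial for $\Block_i\mgeq\Block_\ell$, and in the case $\Block_i\mlt\Block_\ell$ the same decomposition of $e$ into its part from blocks strictly above $\Block_i$ and its remainder, feeding the former into the inductive statement~(2) at level $i-1$. The only cosmetic difference is that the paper argues~(1) by contradiction whereas you argue it directly; the content is identical. Two corner cases you leave implicit---that $e_{\leq i-1}$ must be a nonempty sum in both~(1) and~(2) (else $e\oplus b$ would lie in a block $\mleq\Block_i\mlt\Block_\ell$, forcing $e\oplus b\mlt\ell$), and that $f(\Block_i,\ell,e_{\leq i-1})\neq 0$ in your contradiction step for the bound $e^{(i)}\mleq d_i(\ell)$---are handled by the same witness $b''$ you already produced, so no gap arises.
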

By (\ref{claim:obstaclemus:2:2}) we obtain the statement of Lemma~\ref{lem:obstaclemus}:
Put 
$$\mus(\Block_i, S) = \max_{\ell\in S} d_i(\ell)$$
and as a special case, if for some $i$ we would have $\mus(\Block_i, S) = 0$ choose $\mus(\Block_i, S)\in \Block_i$ arbitrarily.

\medskip
Thus it remains to prove Claim~\ref{claim:obstaclemus:2}.
We will proceed by induction on $i$ with $\ell$ fixed. If $\Block_i\mgeq \Block_\ell$, both statements of Claim~\ref{claim:obstaclemus:2} are trivially satisfied. This also proves the base case, so now we can suppose that $\Block_i \mlt \Block_\ell$.

First suppose that both statements are true up to some $i-1$, but statement~(\ref{claim:obstaclemus:2:2}) is false for $i$, i.e.~there are distances $e_1, e_2, \ldots, e_k \in S$, $b\in \Block_j$, $j\geq i$ and $e = e_1\oplus e_2\oplus \cdots \oplus e_k\in S^\oplus$ with $e\oplus d_i(\ell) \mlt \ell$, but $e\oplus b\mgeq \ell$. Without loss of generality we may assume $j = i$, i.e.~$b\in \Block_i$. Let $(e_i')$ be the subsequence of $(e_i)$ containing only distances from blocks larger than $\Block_i$ (i.e.~blocks $\Block_j$ for $j<i$) and $(e_i'')$ be the complement of $(e_i')$. Let $e' = \bigoplus_i e_i'$ and $e'' = \bigoplus_i e_i''$. Clearly $e = e' \oplus e''$. Finally let $b' = b \oplus e'' \in \Block_i$.

Now $e'\oplus d_i(\ell) \mlt \ell$ and $e'\oplus b'\mgeq \ell$. But $e'$ can be expressed as a sum of elements from $S \cap \left(\bigcup_{j<i} \Block_j\right)$, hence $e'\in X_{i-1}(\ell)$ by the induction hypothesis for~(\ref{claim:obstaclemus:2:1}) and this is a contradiction with the definition of $d_i(\ell)$.

To prove~(\ref{claim:obstaclemus:2:1}), assume that it holds up to $i-1$ and that~(\ref{claim:obstaclemus:2:2}) holds up to $i$. Take an arbitrary $e\in\left(S \cap \bigcup_{j\leq i} \Block_j\right)^\oplus$ with $e\mlt \ell$ such that there is $b\in \Block_{j}$ with $e\oplus b \mgeq \ell$, where $j > i$. We can again assume that $b\in \Block_{i+1}$. We want to prove that $e\in X_i(\ell)$.

As $e\in\left(S \cap \bigcup_{j\leq i} \Block_j\right)^\oplus$, this, by definition, means, that there is a finite sequence $(e_j)$ of elements of $S \cap \bigcup_{j\leq i} \Block_j$ such that $e = \bigoplus_j e_j$.

As in the previous point, let $(e_j')$ be the subsequence of $(e_j)$ containing only distances from blocks larger than $\Block_i$ (i.e.~blocks $\Block_j$ for $j<i$) and $(e_j'')$ be the complement of $(e_j')$. Let $e' = \bigoplus_j e_j'$ and $e'' = \bigoplus_j e_j''$. Clearly $e = e' \oplus e''$, $e'\mlt \ell$ and $e''\in \Block_i$ (using the induction hypothesis we may assume that $e''\neq 0$).

As in the proof of~(\ref{claim:obstaclemus:2:2}), we can observe that $e'\in X_{i-1}$.

If $e''\mgeq d_i(\ell)$, then already $e$ would be at least $\ell$, which contradicts the assumptions. Hence $e''\mlt d_i(\ell)$ and then from the definition of $X(\Block, a)$ it follows that $e'' \in X(\Block_i, d_i(\ell))$, and thus $e\in X_i(\ell)$. This concludes the proof of Claim~\ref{claim:obstaclemus:2}.
\end{proof}

The following proposition (an easy consequence of Lemma~\ref{lem:obstaclemus}) is the main result of this section which will be used in proving the local finiteness property:
\begin{proposition}
\label{prop:Sequivalence}
Let $\EMonoid$ be a distance monoid with finitely many blocks and $S\subset M$ be a finite subset of $M$. There exists $n=n(S)$ such that for every $\ell\in S$ and every sequence $e_1, e_2, \ldots, e_k \in S$ with $\ell \mgt e_1\oplus e_2 \oplus \cdots \oplus e_k$ there is a sequence $f_1, f_2, \ldots, f_m\in S$ satisfying the following properties:
\begin{enumerate}
\item $(f_i)$ is a subsequence of $(e_i)$;
\item $m < n$; and
\item if $(f_i)$ is a proper subsequence of $(e_i)$, $a$ is the largest $e_i$ not in the sequence $(f_i)$, and $b\in\Block_a$  an arbitrary distance, then $\ell\mgt b\oplus f_1\oplus f_2 \oplus \cdots \oplus f_m$. (Here $\Block_a$ is the block containing $a$.)
\end{enumerate}
\end{proposition}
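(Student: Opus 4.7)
The plan is to iterate over the (finitely many) blocks of $\Monoid$ that meet $S$ in decreasing order and to decide, block by block, which summands of $(e_i)$ to retain. First I fix the bound: for each block $\Block$ meeting $S$, let $N_\Block$ be the largest integer $m$ such that there exist $b_1,\dots,b_m\in S\cap\Block$ with $b_1\oplus\cdots\oplus b_m\mlt\mus(\Block,S)$. This is finite because $\Monoid_\Block$ is archimedean and $\min(S\cap\Block)\mgt 0$. Put $n=n(S)=1+\sum_\Block N_\Block$, with the sum ranging over blocks meeting $S$.

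Given $\ell\in S$ and $(e_i)\in S^k$ with $\bigoplus_i e_i\mlt\ell$, I enumerate the blocks meeting $S$ in decreasing order as $\Block^1\mgt\Block^2\mgt\cdots\mgt\Block^p$ and maintain a set of kept indices (initially empty) together with the running sum $K$ (initially $0$); the final $(f_i)$ is read off as the subsequence of $(e_i)$ on the kept indices in their original order. At the $j$-th block I test whether $K\oplus\mus(\Block^j,S)\mlt\ell$: if yes, halt and drop every remaining index $i$ with $e_i$ in $\Block^j$ or in a smaller block; if no, add every index $i$ with $e_i\in\Block^j$ to the kept set, update $K$ to $K\oplus T_j$ where $T_j=\bigoplus_{e_i\in\Block^j}e_i$, and pass to $\Block^{j+1}$. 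If no halt ever occurs, every index is kept and $(f_i)=(e_i)$, so~(3) is vacuous.

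Verification then splits along the three required properties. Condition~(1) is immediate from the construction. For~(2), in the ``no'' branch at $\Block^j$ we have $K\oplus T_j\mleq\bigoplus_i e_i\mlt\ell\mleq K\oplus\mus(\Block^j,S)$, and weak monotonicity of $\oplus$ forces $T_j\mlt\mus(\Block^j,S)$; by the definition of $N_{\Block^j}$ at most $N_{\Block^j}$ of the $e_i$ lie in $\Block^j$, and summing over processed blocks yields length $<n$. For~(3), if the algorithm halts at $\Block^{j^*}$ and the largest dropped element $a$ lies in $\Block^{j_0}$ with $j_0\geq j^*$, then $\mus(\Block^{j_0},S)\mleq\mus(\Block^{j^*},S)$ because the $\mus$-values descend along the block chain, so monotonicity gives $K\oplus\mus(\Block^{j_0},S)\mlt\ell$; applying Lemma~\ref{lem:obstaclemus} to $e=K\in S^\oplus$ and $\Block=\Block_a=\Block^{j_0}$ converts this into $K\oplus b\mlt\ell$ for every $b\in\Block_a$, which is exactly~(3). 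The degenerate case $K=0$ is handled directly: the halting condition $\mus(\Block^{j^*},S)\mlt\ell$ can only hold when $\Block^{j^*}\mlt\Block_\ell$, which makes every $b\in\Block^{j_0}\preceq\Block^{j^*}$ automatically satisfy $b\mlt\ell$.

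The main subtlety will be the non-halting branch, where one needs $T_j\mlt\mus(\Block^j,S)$ without any cancellation property, since distance monoids need not be cancellative. The argument above uses only the weak monotonicity in Definition~\ref{def:DM}: if one had $T_j\mgeq\mus(\Block^j,S)$ instead, monotonicity would force $K\oplus T_j\mgeq K\oplus\mus(\Block^j,S)\mgeq\ell$, contradicting $K\oplus T_j\mlt\ell$. The halting criterion is calibrated precisely so that this squeeze always closes, which is exactly why Lemma~\ref{lem:obstaclemus} is the real engine of the argument: it is the tool that converts block-level information about $\mus$ into the universal statement about all $b\in\Block_a$ demanded by~(3).
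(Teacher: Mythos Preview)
Your argument is correct and rests on the same engine (Lemma~\ref{lem:obstaclemus}) as the paper's, but the selection procedure is organised differently. The paper sets $n=1+\sum_i n_i$ with $n_i$ chosen so that $n_i\times\min(S\cap\Block_i)\mgeq\mus(\Block_i,S)$, and then runs an \emph{element-by-element} scan of $(e_i)$: it maintains a separate running sum $c_j$ for each block and keeps $e_i\in\Block_j$ exactly when $c_j\mleq\mus(\Block_j,S)$, leaving the verification of all three properties to the reader. Your algorithm instead sweeps \emph{block by block} from largest to smallest, keeping every element of a block until a global halting test $K\oplus\mus(\Block^j,S)\mlt\ell$ fires, then dropping everything smaller. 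The paper's selection is independent of $\ell$ and its (omitted) verification of~(3) hinges on the fact that $c_{\Block_a}^{\mathrm{final}}\mgt\mus(\Block_a,S)$; your halting criterion is calibrated so that~(3) becomes a direct instance of the dichotomy in Lemma~\ref{lem:obstaclemus}, which makes the verification you give cleaner and more explicit. Both routes produce the same bound and the same dependence on Lemma~\ref{lem:obstaclemus}.

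One small point: your treatment of the $K=0$ case asserts that $\mus(\Block^{j^*},S)\mlt\ell$ forces $\Block^{j^*}\mlt\Block_\ell$. This does not follow from the \emph{statement} of Lemma~\ref{lem:obstaclemus} alone; it needs $\mus(\Block_\ell,S)\mgeq\ell$, which holds for the specific construction in that lemma's proof (and can in any case be arranged, since any larger value in a block also serves as $\mus$). The paper's verification carries the same implicit dependence, so this is not a defect relative to the paper, but you should make the assumption explicit.
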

We will call the distances $(f_i)$ \emph{important}.
\begin{proof}
Let $\Block_1, \Block_2, \ldots, \Block_b$ be blocks of $\Monoid$ which intersect $S$ nontrivially (for each $\Block_i$ there is an $a_i\in S$ with $a_i\in \Block_i$). For each $\Block_i$ define $m_i$ to be the minimum element of $\Block_i\cap S$ and let $n_i$ be the smallest integer such that $n_i\times m_i \mgeq \mus(\Block_i, S)$ (because $\Block_i$ is archimedean such $n_i$ exists). Put $$n = n(S) = 1+\sum_i n_i.$$

Let $\ell, e_1, e_2, \ldots, e_k \in S$ be given with $\ell \mgt e_1\oplus\cdots\oplus e_k$. 
Now we shall construct the sequence $(f_i)$ using the following algorithm:
For each $\Block_i$ create a variable $c_i$ which is initially set to zero. 
For each $i$ from $1$ to $k$ do the following:
\begin{enumerate}
\item Let $\Block_j$ be the block containing $e_i$.
\item If $c_j \mleq \mus(\Block_j, S)$, put $e_i$ into the sequence and increment $c_j \leftarrow c_j\oplus e_i$.
\end{enumerate}

One can easily check that $(f_i)$ satisfies all properties from the statement.
\end{proof}

\subsection {Convex ordering of $\mathcal M_\Monoid$}
To obtain a Ramsey class we need to define a notion of ordering for classes $\mathcal M_\Monoid$. The following definition is a generalization of convex ordering for equivalences and metric spaces (see e.g.~\cite{The2010}).
Recall that we interpret an $\Monoid$-metric space as a relational structure $\str{A}$ in the language $L_\Monoid$ with (possibly infinitely many) binary relations $\rel{}{s}$, $s\in M^{\succ 0}$.

\begin{definition}
\label{defn:Js}
Given a distance monoid $\EMonoid$ we expand the language $L_\Monoid$ to the language $L^+_\Monoid$ by a binary relational symbol $\leq$ representing a linear order.

Given an $\Monoid$-metric space $$\str{A}=(A,(\rel{A}{s})_{s\in M^{\succ 0}}),$$ we say that an $L^+_\Monoid$-struc\-ture $$\str{A}^+=(A^+,(\rel{A^+}{s})_{s\in M^{\succ 0}},\leq_{\str{A}^+})$$ is a {\em convexly ordered expansion} of $\str{A}$ if
\begin{enumerate}
\item $A = A^+$, $\rel{A}{s} = \rel{A^+}{s}$ for all $s\in M^{\succ 0}$ and
\item $\leq_{\str{A}^+}$ is a linear ordering of $A^+$ such that for every block $\Block$ of $\Monoid$ and every $a, b, c\in A^+$ with $a \sim_\Block b$ and $a\nsim_\Block c$ it holds that $a\leq_{\str{A}^+} c$ if and only if $b\leq_{\str{A}^+} c$ (that is, every ball forms an interval in $\leq_{\str{A}^+}$).
\end{enumerate}
We will denote by $\vv{\mathcal M}_\Monoid$ the class of all convexly ordered $\Monoid$-metric spaces.
\end{definition}

We will now consider a further (more technical) expansion of the class $\mathcal M_\Monoid$
which will make it possible to apply Theorem~\ref{thm:mainstrongclosures}. One-to-one correspondence between structures in both expansions will let us show the Ramsey property for $\vv{\mathcal M}_\Monoid$.  For this let $B_\Monoid$ be the set of all non-zero non-maximal (in the block order) blocks of $\Monoid$.
\begin{definition}\label{defn:lstar}
Given a distance monoid $\EMonoid$ with finitely many blocks denote by $L^\star_\Monoid$ the expansion of 
the language $L^+_\Monoid$ adding
\begin{enumerate}
\item unary function symbols $\func{}{\Block}$ for every block $\Block\in B_\Monoid$,
\item unary function symbols $\func{}{\Block,\Block'}$ for every pair of blocks $\Block, \Block'\in B_\Monoid$ such that $\Block\mlt \Block'$, and
\item binary symmetric relation symbols $\rel{}{t}$ for every $t=t(\Block,\ell)$ where $\Block\in B_\Monoid$ and $\Block_\ell \mgt \Block$.
\end{enumerate}
For a given convexly ordered metric space $\str{A}\in \vv{\mathcal M}_\Monoid$, denote by $L^\star(\str{A})$ the {\em $L^\star$-expansion} (or {\em lift}) of $\str{A}$ created by the following procedure:
\begin{enumerate}
\item For every $\Block\in B_\Monoid$ enumerate balls of diameter $\Block$ in $\str{A}$ as $E^1_\Block, E^2_\Block,\ldots,\allowbreak E^{n_\Block}_\Block$ in the order of $\leq_\str{A}$ (recall that balls are linear intervals in $\leq_\str{A}$ and thus this is well defined).

\item For every $\Block\in B_\Monoid$ and $1\leq i\leq n_\Block$ add a new vertex $v^i_\Block$.
\item For every $\Block\in B_\Monoid$,  $1\leq i\leq n_\Block$ and $v\in E^i_\Block$ put $\nbfunc{L^\star(\str{A})}{\Block}(v)=v^i_\Block$.
\item For every pair of blocks $\Block, \Block'\in B_\Monoid$ such that $\Block\mlt \Block'$ and every $1\leq i\leq n_\Block$ put $\nbfunc{L^\star(\str{A})}{\Block,\Block'}(v^i_\Block)=\nbfunc{L^\star(\str{A})}{\Block'}(v)$  where $v$ is some vertex of $E^i_\Block$.
\item For every $\Block\in B_\Monoid$ and $1\leq i < j\leq n_\Block$ put $(v^i_\Block,v^j_\Block),(v^j_\Block,v^i_\Block)\in\nbrel{L^\star(\str{A})}{t(\Block, \ell)}$, where $\ell = d(u,v)$ for arbitrary $u\in E^i_\Block$ and $v\in E^j_\Block$ (by Observation~\ref{obs:blocktypes} this does not depend on the choice of $u$ and $v$).
\item Extend linear ordering $\leq_\str{A}$ to ordering $\leq_{L^\star(\str{A})}$ by putting
\begin{enumerate}
\item $v\leq_{L^\star(\str{A})} v' $ for every $v\in A$, $v'\not \in A$,
\item $v^i_\Block \leq_{L^\star(\str{A})} v^j_\Block$ for every $\Block\in B_\Monoid$ and $1\leq i\leq j\leq n_\Block$, and
\item $v^i_\Block \leq_{L^\star(\str{A})} v^j_{\Block'}$ for every $\Block\prec \Block'$, $1\leq i\leq n_\Block$ and $1\leq j\leq n_{\Block'}$.
\end{enumerate}
\end{enumerate}
We will denote by $\mathcal M^\star_\Monoid$ the class of all substructures of all $L^\star(\str{A})$, $\str{A}\in \vv{\mathcal M}_\Monoid$.
\end{definition}

Observe that for every block $\Block$ of a distance monoid $\Monoid$, every structure $\str{A} \in \mathcal M^\star_\Monoid$ and every two vertices $u,v\in\str{A}$ we have $\nbfunc{L^\star(\str{A})}{\Block}(u) = \nbfunc{L^\star(\str{A})}{\Block}(v)$ if and only if $u\sim_\Block v$. Thus, in the model-theoretical language, $L^\star(\str{B})$ is essentially $\str{B}^{\mathrm{eq}}$ for $\str{B}\in \vv{\mathcal M}_\Monoid$.

\subsection{Proof of the main result}
\label{subsec:main}

We first prove the main result assuming that there are only finitely many blocks and later generalize it to the infinite case. To improve the presentation, we shall split it into several auxiliary lemmas.

Let $\EMonoid$ be a distance monoid with finitely many blocks and let $\str{A}$ be an $L^\star_\Monoid$-structure such that every irreducible substructure of $\str A$ belongs to $\mathcal M^\star_\Monoid$. We say that a vertex $v\in \str{A}$ is an \emph{original vertex}, if the functions $\func{A}{\Block}$ are defined on this vertex (i.e.~it is contained in the underlying metric space) for every block $\Block$. Otherwise we call the vertex a \emph{ball vertex}.

\begin{remark}
We needed to let $\mathcal M^\star_\Monoid$ be the class of all substructures of some $L^\star(\str{A})$, $\str{A}\in \vv{\mathcal M}_\Monoid$ in order to ensure that it is hereditary (which is required by Theorem~\ref{thm:mainstrongclosures}). In practice, this amounts to allowing our structures to contain some ball vertices to which no original vertices point.
\end{remark}


In order to use Theorem~\ref{thm:mainstrongclosures} we need to prove that $\mathcal M^\star_\Monoid$ is a strong amalgamation class and that it is a locally finite subclass of some Ramsey class $\mathcal R$ (in our case $\mathcal R$ will be the class of all finite $L^\star_\Monoid$-structures which is Ramsey by Theorem~\ref{thm:NRclosures}). Both these statements have very similar flavours and we will obtain them as corollaries of something slightly stronger.

Let $\str A$ be an $L^\star_\Monoid$-structure and let $D'\subseteq A$ be a subset of its vertices. Then we say that an $L^\star_\Monoid$-structure $\str D$ is a \emph{closure} of $D'$ in $\str A$ if all the following conditions hold:
\begin{enumerate}
\item $\str D$ is a substructure of $\str A$ (and thus if $\nbfunc{}{}$ is a function symbol of $L^\star_\Monoid$, $v\in D$ and there is $u\in A$ such that $\func{A}{}(v) = u$, then also $u\in D$);
\item $D'\subseteq D$;
\item $\str D$ is minimal such substructure.
\end{enumerate}

\begin{lemma}\label{lem:finite4value2:1}
Let $\EMonoid$ be a distance monoid with finitely many blocks and let $\str A$ be an $L^\star_\Monoid$-structure such that every irreducible substructure of $\str A$ belongs to $\mathcal M^\star_\Monoid$. Then for every pair of original vertices $u,v\in \str{A}$ and every block $\Block\in B_\Monoid$ such that there exists a walk from $u$ to $v$ where every distance is
at most some $a\in \Block$ we have $\func{A}{\Block}(u)=\func{A}{\Block}(v)$.
\end{lemma}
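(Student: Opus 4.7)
The plan is to prove the lemma by induction on the walk length, reducing it to the following single-edge strengthening of the statement: whenever $u,v \in \str{A}$ are connected by a relation $\rel{}{d}$ with $d \mleq a \in \Block$ and $\func{A}{\Block}(u)$ is defined, then $\func{A}{\Block}(v)$ is also defined and equals $\func{A}{\Block}(u)$. Granted this, induction on $i$ from $0$ to $k$ along the walk $u=u_0,u_1,\ldots,u_k=v$---the base case using that $u_0=u$ is original so $\func{A}{\Block}(u_0)$ is defined, and the inductive step applying the single-edge statement to the edge $u_iu_{i+1}$---gives $\func{A}{\Block}(u_i)=\func{A}{\Block}(u_0)$ for every walk vertex, and in particular $\func{A}{\Block}(v)=\func{A}{\Block}(u)$.

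To prove the single-edge statement, set $w=\func{A}{\Block}(u)$ and let $\str{X}$ be the closure in $\str{A}$ of $\{u,v,w\}$, that is, the smallest substructure of $\str{A}$ containing these three vertices. The first key step is to show that $\str{X}$ is irreducible. Suppose $\str{X}=\str{X}_1\cup_{\str{Z}}\str{X}_2$ were a free amalgamation of two proper substructures. The relation $\rel{}{d}$ forces $u,v$ to lie on the same side or else in the apex $\str{Z}$; the function equation $\func{}{\Block}(u)=w$ forces $w$ to lie on the same side as $u$; and the substructure condition propagates function images of apex vertices into the apex. A short case analysis on the positions of $u,v,w$ relative to $\str{Z}$ then rules out every split in which both $\str{X}_1\setminus \str{Z}$ and $\str{X}_2\setminus \str{Z}$ are nonempty, so no proper free-amalgamation decomposition of $\str{X}$ exists.

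By the hypothesis on $\str{A}$, $\str{X}\in \mathcal{M}^\star_\Monoid$, so there is an embedding $\iota\colon\str{X}\to L^\star(\str{Y})$ for some $\str{Y}\in \vv{\mathcal{M}}_\Monoid$. In $L^\star(\str{Y})$ the relation $\rel{}{d}$ can only hold between original vertices of $\str{Y}$, hence $\iota(u),\iota(v)$ are original vertices at $\Monoid$-distance $d\mleq a\in \Block$, so $\iota(u)\sim_\Block \iota(v)$ and $\func{L^\star(\str{Y})}{\Block}(\iota(u))=\func{L^\star(\str{Y})}{\Block}(\iota(v))=\iota(w)$. The ``iff'' in the embedding condition for function symbols then forces $\func{X}{\Block}(v)$ to be defined (because $\func{L^\star(\str{Y})}{\Block}(\iota(v))$ is) and transporting the value back through $\iota$ yields $\func{A}{\Block}(v)=w=\func{A}{\Block}(u)$. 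The main obstacle is the irreducibility verification: one has to trace through the cases carefully, taking into account that the closure $\str{X}$ may pick up auxiliary function images (for instance a pre-existing $\func{A}{\Block}(v)$ a priori distinct from $w$, whose existence is in fact ruled out by this very argument), and to check that each such vertex is pinned by a function to the same side of any attempted split as $u$.
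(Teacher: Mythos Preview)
Your argument is correct and follows essentially the same route as the paper. The paper's proof is extremely terse: it simply asserts that the closure of any edge of $\str{A}$ is irreducible, hence lies in $\mathcal M^\star_\Monoid$, from which $\func{A}{\Block}(u')=\func{A}{\Block}(w')$ follows for neighbouring walk vertices $u',w'$; then one inducts along the walk. Your write-up unpacks exactly these two steps (the irreducibility of the edge-closure and the transport via an embedding into some $L^\star(\str{Y})$), and in addition makes explicit the strengthening that $\func{A}{\Block}$ is forced to be \emph{defined} on every walk vertex once it is defined on the first one.

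One small simplification: since $w=\func{A}{\Block}(u)$ already lies in the closure of $\{u,v\}$, taking the closure of $\{u,v,w\}$ gives the same $\str{X}$ as the closure of the edge $\{u,v\}$; so your irreducibility verification is exactly the paper's ``closure of every edge is irreducible'', and the extra generator $w$ is harmless but unnecessary. The clean way to phrase the irreducibility is the one you gesture at in your last paragraph: every vertex of $\str X$ is an iterated function image of $u$ or of $v$, so whichever factor $\str X_i$ contains both $u$ and $v$ (forced by the relation $\rel{}{d}$, since $\str X_i$ is a substructure) must contain all of $X$.
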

\begin{proof}
Let $u'$ and $w'$ be two neighbouring vertices in the walk.  Because 
the closure every edge of $\str{A}$ is irreducible, it follows that $\nbfunc{\str{A}}{\Block}(u')=\nbfunc{\str{A}}{\Block}(w')$.
Consequently, all vertices of the walk have same value of $\nbfunc{\str{A}}{\Block}$.
\end{proof}

For an $L^\star_\Monoid$-structure $\str A$ we will denote by $\str A^-$ the same structure without the order.
\begin{lemma}\label{lem:ordercompletion}
Let $\EMonoid$ be a distance monoid with finitely many blocks and let $\str A$ be an $L^\star_\Monoid$-structure such that every irreducible substructure of $\str A$ belongs to $\mathcal M^\star_\Monoid$ and moreover $\leq_\str A$ has a linear extension. If there is $\str B\in \mathcal M^\star_\Monoid$ such that $\str B^-$ is a strong completion of $\str A^-$ then there exists $\str B'\in \mathcal M^\star_\Monoid$ which is a strong completion of $\str A$
\end{lemma}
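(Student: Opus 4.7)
The plan is to produce $\str{B}'$ by re-ordering $\str{B}^-$ so that its linear order extends $\leq_\str{A}$. Identifying $A$ with its image in $B$ under the given strong completion, the task reduces to choosing a convex linear order on the original vertices of $B$ whose restriction to the original vertices of $A$ extends $\leq_\str{A}$; the order on ball vertices is then forced by clauses (a)--(c) of Definition~\ref{defn:lstar}, and one must check that it agrees with $\leq_\str{A}$ restricted to ball vertices of $A$.

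I would work top-down through the finitely many non-zero blocks $\Block_p\mgt\Block_{p-1}\mgt\cdots\mgt\Block_1$. At stage $i$ there is a partition of the original vertices of $B$ into $\Block_i$-balls, refining the partition from stage $i+1$, and within each parent $\Block_{i+1}$-ball I would linearly order its $\Block_i$-sub-balls in a way compatible with $\leq_\str{A}$, then recurse. Concatenating these orderings yields a convex linear order on $B$ by construction.

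The key step is showing that at each stage the partial order on $\Block_i$-balls induced by $\leq_\str{A}$ is acyclic. An induced constraint $X\prec Y$ between two $\Block_i$-balls of $B$ arises either from an explicit order relation $v_X\leq_\str{A} v_Y$ between ball vertices $v_X,v_Y\in A$ representing $X$ and $Y$, or from a pair of original vertices $u\in X\cap A$, $v\in Y\cap A$ with $u\leq_\str{A} v$. In the second case I would consider the substructure $\str{S}$ of $\str{A}$ obtained by closing $\{u,v\}$ under the function symbols $\func{}{\Block}$ and $\func{}{\Block,\Block'}$: the order edge $u\leq_\str{A} v$ prevents any free decomposition of $\str{S}$ separating $u$ from $v$, and any other attempted free decomposition is blocked by the function edges tying each original vertex to all of its ball vertices, so $\str{S}$ is irreducible. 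Hence $\str{S}\in\mathcal{M}^\star_\Monoid$ by hypothesis, and convexity of its order forces $\func{A}{\Block_i}(u)\leq_\str{A}\func{A}{\Block_i}(v)$. Consequently every induced constraint on $\Block_i$-balls is already witnessed by an ordering between ball vertices inside $\leq_\str{A}$, so a cycle among the balls would yield a cycle among ball vertices in $\leq_\str{A}$, contradicting the hypothesis that $\leq_\str{A}$ admits a linear extension.

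The main obstacle will be formalising the irreducibility argument and the convexity invocation in the preceding paragraph; once these are carried out, linearising the acyclic constraints stage by stage and reading off the induced ordering on ball vertices via Definition~\ref{defn:lstar} produces the required strong completion $\str{B}'\in\mathcal{M}^\star_\Monoid$ of $\str{A}$.
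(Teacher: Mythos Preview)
Your approach is essentially the same as the paper's: both build a convex (lexicographic) linear order on $B$ extending $\leq_{\str A}$, the paper by taking an arbitrary linear extension $\leq_0$ of $\leq_{\str A}$ on $B$ and then sorting original vertices lexicographically by the $\leq_0$-values of their ball vertices $(\func{}{\Block_i}(u))_i$, with an analogous rule for ball vertices. Your recursive top-down description amounts to the same construction.

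You actually make explicit the one step the paper sweeps under ``it is easy to check'': why the resulting order really extends $\leq_{\str A}$. Your irreducibility argument for the closure of a pair $\{u,v\}$ of original vertices is correct and is exactly what is needed. One small gap: your case analysis of the constraints on $\Block_i$-balls omits the case of two ball vertices $b_1 \leq_{\str A} b_2$ at some lower level $\Block_j \prec \Block_i$ lying in distinct $\Block_i$-balls (via $\func{}{\Block_j,\Block_i}$). The same argument handles it --- close $\{b_1,b_2\}$ under functions, observe irreducibility from the order edge and the function edges, and use clause~(b) of Definition~\ref{defn:lstar} together with convexity --- but you should state it, since the functions $\func{}{\Block,\Block'}$ are in the language precisely so that this case behaves like the others.
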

\begin{proof}
We can assume that $A\subseteq B$ and that the inclusion is a homomorphism-embedding $\str A^-\to\str B^-$. It follows that there is a linear order $\leq_0$ of vertices of $\str B^-$ such that it extends $\leq_\str A$. We will now define a linear order $\leq_1$ such that $\str B^-$ together with $\leq_1$ will give the desired $\str B'$.

We explicitly define the order $\leq_1$ as follows:
\begin{enumerate}
	\item Enumerate blocks of $\Monoid$ as $\Block_1\mgt \Block_2\mgt\cdots\mgt \Block_m$.
	\item For every pair $u,v$ of original vertices of $\str{B}^-$ put $u\leq_1 v$ if the sequence of vertices $(\nbfunc{\str B^-}{\Block_i}(u))_{i=1}^m$ is in order $\leq_0$ lexicographically before $(\nbfunc{\str B^-}{\Block_i}(v))_{i=1}^m$ or they are the same and $u\leq_0 v$.\footnote{If $a=(a_1, \ldots, a_p)$ and $b=(b_1, \ldots, b_p)$ are sequences then $a$ is lexicographically before $b$ if there is $1\leq i\leq p$ such that $a_i \mlt b_i$ and $a_j = b_j$ for all $j<i$.}

	\item For every pair $u,v$ of ball vertices such that $u$ corresponds to block $\Block_i$ and $v$ to block $\Block_j$ put $u\leq_1 v$ if one of the following holds:
\begin{enumerate}
\item $i<j$,
\item $i=j$ and the sequence $(\nbfunc{\str B^-}{\Block_i,\Block_{i'}}(u))_{i'=1}^i$ is in order $\leq_0$ lexicographically before $(\nbfunc{\str B^-}{\Block_j,\Block_{j'}}(v))_{j'=1}^j$, where $\nbfunc{\str B^-}{\Block_i,\Block_{i}}(u)=u$ and analogously for $v$.
\end{enumerate}
 \item Finally put $u\leq_1 v$ if $u$ is original vertex and $v$ is ball vertex.
\end{enumerate}

It is easy to check that the order $\leq_1$ satisfies Definition~\ref{defn:lstar} and extends $\leq_\str A$ (the convex ordering is precisely the lexicographic order with respect to $\leq_0$ which we used in the construction).
\end{proof}

Let $\str A$ be an $L^\star_\Monoid$-structure whose every irreducible substructure belongs to $\mathcal M^\star_\Monoid$. We say that a ball vertex $b\in A$ representing a ball of diameter $\Block$ is an \emph{orphan} if there is no original vertex $v\in A$ such that $\func{A}{\Block}(v) = b$.

\begin{lemma}\label{lem:noorphans}
Let $\EMonoid$ be a distance monoid with finitely many blocks and let $\str A$ be an $L^\star_\Monoid$-structure such that every irreducible substructure of $\str A$ belongs to $\mathcal M^\star_\Monoid$. Then $\str A$ has a completion in $\mathcal M^\star_\Monoid$ if and only if $\str B$ does, where $\str B$ is an $L^\star_\Monoid$-structure which contains $\str A$ as a substructure such that moreover for every orphan $b\in A$ representing a ball of diameter $\Block$ we add an original vertex $v_b$ and ball vertices $b_{\Block'}$ to $\str B$ for every $\Block'\mlt \Block$ such that $\func{B}{\Block}(v_b) = b$ and $\func{B}{\Block'}(v_b) = b_{\Block'}$. We define all the other functions so that the closure of $v_b$ is from $\mathcal M^\star_\Monoid$.

Moreover, if $\str K\subseteq \str B$ has no completion in $\mathcal M^\star_\Monoid$ then neither does the substructure induced by $\str A$ on $K\cap A$.
\end{lemma}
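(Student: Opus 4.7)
The plan is as follows. The forward direction of the iff is immediate: restricting any homomorphism-embedding witnessing a completion of $\str{B}$ to the substructure $\str{A}\subseteq\str{B}$ yields a completion of $\str{A}$. Similarly, if $\str{K}\subseteq \str{B}$ admits a completion then so does the substructure induced by $\str{A}$ on $K\cap A$, by further restriction. What remains --- and what subsumes the backward direction of the iff, by taking $\str{K}=\str{B}$ --- is to prove the contrapositive of the ``moreover'' statement: whenever $\str{A}|_{K\cap A}$ has a completion in $\mathcal{M}^\star_\Monoid$, so does $\str{K}$.

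For this, I would exploit the defining feature of $\mathcal{M}^\star_\Monoid$: every object embeds into some $L^\star(\str{D})$ with $\str{D}\in\vv{\mathcal{M}}_\Monoid$, and by construction $L^\star(\str{D})$ has \emph{no} orphan ball vertices. Given a completion $\str{C}'\in\mathcal{M}^\star_\Monoid$ of $\str{A}|_{K\cap A}$, composing with $\str{C}'\hookrightarrow L^\star(\str{D})$ produces a homomorphism-embedding $\phi\colon\str{A}|_{K\cap A}\to L^\star(\str{D})$ into an orphan-free target. I would then extend $\phi$ to $\Phi\colon \str{K}\to L^\star(\str{D})$ as follows: for each $v_b\in K$ (where $b\in K\cap A$ is an orphan of $\str{A}$ representing a $\Block$-ball), pick an original vertex $w_b\in L^\star(\str{D})$ with $\nbfunc{L^\star(\str{D})}{\Block}(w_b)=\phi(b)$, which exists precisely because $L^\star(\str{D})$ is orphan-free, and set $\Phi(v_b)=w_b$ and $\Phi(b_{\Block'})=\nbfunc{L^\star(\str{D})}{\Block'}(w_b)$ for every $\Block'\mlt\Block$. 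The ``upward'' ball vertices $\func{B}{\Block'}(v_b)$ for $\Block'\mgt\Block$ already live in $A$ and inherit their images from $\phi$.

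To verify $\Phi$ is a strong completion, two things must hold. First, $\Phi$ must be a homomorphism-embedding, which reduces to checking irreducible substructures of $\str{K}$: these are either contained in $\str{A}|_{K\cap A}$ (handled by $\phi$) or sit inside the closure of a single $v_b$, which is a ``tower'' isomorphic to the closure of $w_b$ in $L^\star(\str{D})$. Second, $\Phi$ must be injective: each $w_b$ lies in a ball containing no original vertex of $\phi(\str{A}|_{K\cap A})$ (because $b$ was an orphan), and for distinct orphans $b_1\neq b_2$ of the same block $\Block$ their $\phi$-images are distinct $\Block$-ball vertices, forcing $w_{b_1}$ and $w_{b_2}$ to lie in distinct $\Block$-balls and hence in distinct $\Block'$-balls for every $\Block'\mlt\Block$. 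The main delicate point will be verifying coherence of the inter-block function values $\nbfunc{}{\Block,\Block'}$ among the old and new ball vertices, but this falls out of the fact that in $L^\star(\str{D})$ smaller balls refine larger ones, so $\nbfunc{L^\star(\str{D})}{\Block',\Block}(\nbfunc{L^\star(\str{D})}{\Block'}(w_b))=\nbfunc{L^\star(\str{D})}{\Block}(w_b)=\phi(b)$ for every $\Block'\mlt\Block$.
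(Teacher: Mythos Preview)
Your approach is essentially the same as the paper's: both pass from a completion $\str{C}'\in\mathcal M^\star_\Monoid$ to an enveloping $L^\star(\str{D})$ (the paper writes $L^\star(\str{H})$), observe that the latter has no orphans, and then accommodate the newly added vertices $v_b$ and $b_{\Block'}$ inside it. You spell out the map $\Phi$ explicitly by choosing a representative $w_b$ in each orphan's ball; the paper instead ``duplicates'' vertices of $\str{H}$ so that every ball acquires enough fresh sub-balls, and then invokes the homomorphism-embedding into $L^\star(\str{H}')$ without naming it. Your reduction of the backward direction of the iff to the ``moreover'' clause (with $\str{K}=\str{B}$) is exactly the paper's observation, stated in reverse.

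One point worth tightening: you aim for a \emph{strong} completion and argue injectivity of $\Phi$, but the lemma only asks for a completion. Your injectivity argument has a gap: the claim that ``distinct orphans $b_1\neq b_2$ of the same block have distinct $\phi$-images'' and that ``$w_b$ lies in a ball containing no original vertex of $\phi(\str{A}|_{K\cap A})$'' both presuppose that the homomorphism-embedding $\phi$ separates the relevant ball vertices, which need not hold (a homomorphism-embedding may identify ball vertices that are not jointly contained in any irreducible substructure). This is harmless for the lemma as stated, since the homomorphism-embedding property of $\Phi$ --- which you do establish --- already suffices. The paper's duplication step is precisely what would be needed to repair injectivity, should one want it; without duplication your direct map still gives a valid (non-strong) completion.
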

\begin{proof}
Clearly, as $\str A\subseteq \str B$, every completion of $\str B$ is also a completion of $\str A$. We need to prove the other implication.

Suppose that $\str A$ has a completion $\str A'\in\mathcal M^\star_\Monoid$. By definition of $\mathcal M^\star_\Monoid$ there is a convexly ordered $\Monoid$-metric space $\str H$ such that $\str A'\subseteq L^\star(\str H)$. In particular, $L^\star(\str H)$ has no orphans. Using Proposition~\ref{prop:cycles}, one can then create $\str H'$ by ``duplicating'' some vertices of $\str H$ and thereby ensuring that every ball of $\str H$ has many sub-balls of smaller diameters. $L^\star(\str H')$ is then a completion of $\str B$.

The moreover part is in fact just the statement of the lemma said differently as we are adding original vertices to $\str B$ only because of local reasons. In other words, if we plug what $\str A$ induces on $K\cap A$ into the statement of this lemma as $\str A$, $\str B$ will precisely be $\str K$.
\end{proof}

From now on, when we are talking about completion $L^\star_\Monoid$-structures to $\mathcal M^\star_\Monoid$, we can assume that such structures have no orphans.

\medskip

Let $\EMonoid$ be a distance monoid with finitely many blocks, let $\str A$ be an $L^\star_\Monoid$-structure whose every irreducible substructure belongs to $\mathcal M^\star_\Monoid$ and let $u,u'$ be two ball vertices of $\str A$, both representing a ball of diameter $\Block$. If there is some $\ell\in\Monoid$ such that $(u,u')\in \rel{A}{t(\Block,\ell)}$ then for every $\Block'\mgt \Block$ it holds that either $\func{A}{\Block,\Block'}(u)=\func{A}{\Block,\Block'}(u')$, or $(\func{A}{\Block,\Block'}(u),\func{A}{\Block,\Block'}(u'))\in\rel{A}{t}$ where $t\supseteq t(\Block,\ell)$. (Note that this holds even in the corner case when $u,u'$ are original vertices and we understand their actual distance as $t(\Block_{\{0\}},d(u,u'))$.) This means that in such structures the information about ``block distances'' is consistent. Generalizing this observation, for every pair of vertices $u\neq v$ of such a structure and every block $\Block$ of $\Monoid$ we define $t(\Block, u,v)$ as follows:
$$t(\Block, u,v) = \begin{cases}
\{d_\str{A}(u,v)\} & \text{ if }\Block = \{0\}\text{ and }d_\str{A}(u,v)\text{ is defined,}\\
\Block & \text{ if }\func{A}{\Block}(u)=\func{A}{\Block}(v),\\
\Block & \text{ if }\Block\text{ is the maximal block of $\Monoid$},\\
t & \text{ if } (\func{A}{\Block}(u),\func{A}{\Block}(v)) \in \rel{A}{t},\\
\text{undefined} & \text{otherwise}.
\end{cases}$$

Using this, we define the \emph{block distance} $t(u,v)$ as $t(\Block,u,v)$ for the smallest block $\Block$ for which $t(\Block,u,v)$ is defined. Note that for every $u,v$ there is a block $\Block$ such that $t(u,v)\subseteq \Block$.

\begin{lemma}\label{lem:blockdistances}
Let $\EMonoid$ be a distance monoid with finitely many blocks and let $\str A$ be an $L^\star_\Monoid$-structure such that every irreducible substructure of $\str A$ belongs to $\mathcal M^\star_\Monoid$. Let $b\neq b'$ be ball vertices of $\str A$ representing balls of diameter $\Block$. If there exist original vertices $v,v'\in A$ such that $\func{A}{\Block}(v) = b$, $\func{A}{\Block}(v') = b'$ and $d_\str{A}(v,v') = \ell$, then $(b,b')\in\rel{A}{t(\Block,\ell)}$.
\end{lemma}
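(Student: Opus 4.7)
I would prove this by reducing to the definition of $\mathcal M^\star_\Monoid$ applied to an irreducible witness substructure containing the configuration $\{v,v',b,b'\}$.

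First, let $\str C$ be the closure in $\str A$ of $\{v,v'\}$ (so $\str C$ contains $v,v'$ together with all iterated function values, in particular $b = \func{A}{\Block}(v)$ and $b' = \func{A}{\Block}(v')$, plus the appropriate ball vertices for other blocks reachable via the $\func{}{\Block',\Block''}$ chain). I claim $\str C$ is irreducible. Indeed, in any attempted free amalgamation $\str C = \str D \oplus_{\str F} \str E$, the tuple $(v,v') \in \rel{C}{\ell}$ must sit entirely on one side, say in $D$; then both $v$ and $v'$ lie in $D$. Since any vertex of $\str C$ is obtained from $v$ or $v'$ by iterated application of unary function symbols, and function values must lie on the same side as the argument, every vertex of $\str C$ lies in $D$. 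Hence $E \subseteq D$ and the amalgamation is trivial, so $\str C$ is not a free amalgamation of two proper substructures.

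By hypothesis every irreducible substructure of $\str A$ belongs to $\mathcal M^\star_\Monoid$, so $\str C \in \mathcal M^\star_\Monoid$. By definition of $\mathcal M^\star_\Monoid$ (Definition~\ref{defn:lstar}), there is some convexly ordered $\Monoid$-metric space $\str H \in \vv{\mathcal M}_\Monoid$ such that $\str C$ embeds as a substructure of $L^\star(\str H)$. The embedding identifies $v,v'$ with original vertices of $\str H$ at distance $d_\str{H}(v,v') = \ell$ (since the relation $\rel{}{\ell}$ is preserved), and identifies $b,b'$ with the ball vertices $\nbfunc{L^\star(\str H)}{\Block}(v)$ and $\nbfunc{L^\star(\str H)}{\Block}(v')$. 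Because $b \neq b'$, the points $v$ and $v'$ lie in distinct balls of diameter $\Block$ in $\str H$.

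Now step~(5) of the construction in Definition~\ref{defn:lstar} explicitly places the pair $(b,b')$ into the relation $\rel{L^\star(\str H)}{t(\Block,\ell)}$, where $\ell = d_\str{H}(v,v')$ is precisely our distance. Since the inclusion $\str C \hookrightarrow L^\star(\str H)$ is an embedding and $\str C \hookrightarrow \str A$ is a substructure inclusion, relations are preserved in both directions, and we conclude $(b,b') \in \rel{A}{t(\Block,\ell)}$ as required. The only non-routine point is the irreducibility of the closure; once that is in hand, the rest is just unpacking definitions.
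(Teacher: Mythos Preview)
Your proof is correct and is exactly the argument the paper has in mind: the paper's one-line proof (``follows from the fact that every irreducible substructure of $\str A$ belongs to $\mathcal M^\star_\Monoid$ where this holds'') implicitly uses precisely the closure-of-$\{v,v'\}$ substructure you describe, and your irreducibility check for that closure is the detail the paper suppresses. There is no meaningful difference in approach.
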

\begin{proof}
Follows from the fact that every irreducible substructure of $\str A$ belongs to $\mathcal M^\star_\Monoid$ where this holds.
\end{proof}

We now define an explicit completion procedure for $L^\star_\Monoid$-structures whose every irreducible substructure belongs to $\mathcal M^\star_\Monoid$. Let $\str A$ be an $L^\star_\Monoid$-structure whose every irreducible substructure belongs to $\mathcal M^\star_\Monoid$. For a pair of vertices $u,v\in A$ such that $t(u,v) = t(\Block,\ell)$, we say that $t(u,v)$ is \emph{witnessed} in $\str A$ if there are vertices $u',v'$ in $\str A$ such that $u\sim_\Block u'$, $v\sim_\Block v'$ and $d_\str{A}(u',v')\in t(u,v)$.

\begin{definition}\label{defn:starcompletion}
Let $\EMonoid$ be a distance monoid with finitely many blocks and let $\str A$ be an $L^\star_\Monoid$-structure satisfying the following:
\begin{enumerate}
	\item $\str A$ has no orphans,
	\item every irreducible substructure of $\str A$ belongs to $\mathcal M^\star_\Monoid$, and
	\item there is a linear extension $\leq_0$ of $\leq_\str A$.
\end{enumerate}

We define an $L^\star_\Monoid$-structure $\str C$ as follows:
\begin{enumerate}
\item Let $T$ be a finite set of block-types such that for every $u,v\in A$ we have $t(u,v)\in T$.
\item\label{starfcompletion:S} Let $S$ be a finite subset of $\Monoid$ such that $S$ contains a distance from each block, $\str A$ uses only distances from $S$ and for every $t\in T$ it holds that $S\cap t\neq \emptyset$.
\item For every $t\in T$ such that $t\subseteq \Block$ pick $b(t)\in t\cap S^\oplus$ to be either the largest element of $t(u,v)$ or $b(t)\mgeq \mus(\Block,S)$. Note that since $S$ contains a distance from each block and $S\cap t\neq \emptyset$, we can indeed pick $b(t)\in S^\oplus$.
\item Let $S' = S\cup \{b(t) : t\in T\}$ and for every $t=t(\Block,\ell)\in T$ pick $a(t)\in \Block\cap S^\oplus$ such that $a\mgeq \mus(\Block, S')$.
\item Let $\str G$ be the $\Monoid$-graph induced by $\str A$ on the set of original vertices.
\item\label{starfcompletion:distances} For every pair of original vertices $u,v$ of $\str A$ such that $t(u,v)$ is not witnessed in $\str G$, add a path with distances $a(t(u,v)),b(t(u,v)),\allowbreak a(t(u,v))$ into $\str G$ connecting $u$ and $v$ (the two interior vertices are new).
\item Let $\str G'$ be the $\Monoid$-shortest path completion of $\str G$ together with an arbitrary convex order (see Definition~\ref{defn:Js}) and put $\str C = L^\star(\str G')$.
\end{enumerate}
\end{definition}

\begin{lemma}\label{lem:starfcompletionworks}
In the setting of Definition~\ref{defn:starcompletion}, $\str G$ is $\Monoid$-metric if and only if $\str A$ has a completion in $\mathcal M^\star_\Monoid$. Moreover, if $\str A$ has a completion in $\mathcal M^\star_\Monoid$ then $\str C^-$ is a strong completion of $\str A^-$.
\end{lemma}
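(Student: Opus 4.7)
I plan to handle the two implications separately; the ``moreover'' clause will follow from the explicit construction of $\str{C}$ in the second direction. For the direction ``$\str{A}$ has a completion $\Rightarrow \str{G}$ is $\Monoid$-metric'' I argue by contraposition. Suppose $\str{G}$ contains a non-$\Monoid$-metric cycle. After contracting through the degree-two interior vertices of the added triples, this cycle becomes a closed walk $v_1, v_2, \ldots, v_k, v_1$ on original vertices of $\str{A}$ whose consecutive pairs are linked either by an original edge of $\str{A}$ or by an added witness triple of $\Monoid$-length $a(t) \oplus b(t) \oplus a(t)$ with $t = t(v_i, v_{i+1}) = t(\Block, \ell)$. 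In any putative completion $\str{A}^\star \in \mathcal{M}^\star_\Monoid$ each such triple may be replaced by a genuine walk: since $(\func{A}{\Block}(v_i), \func{A}{\Block}(v_{i+1})) \in \rel{A}{t}$, in $\str{A}^\star$ the block-type is realised by actual vertices $w_1^\star \sim_\Block v_i$, $w_2^\star \sim_\Block v_{i+1}$ with $d_{\str{A}^\star}(w_1^\star, w_2^\star) \in t$, and the triangle inequality bounds the resulting walk by $a^\star \oplus \ell^\star \oplus a^\star \mleq a(t) \oplus b(t) \oplus a(t)$ using $a(t) \mgeq \mus(\Block, S')$ and Lemma~\ref{lem:obstaclemus}. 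Transporting the whole cycle yields a non-$\Monoid$-metric configuration in $\str{A}^\star$ (because the direct distance $d_{\str{A}^\star}(v_1, v_k) \in t(v_1, v_k) \subseteq \Block_\ell$ dominates the transported walk), contradicting $\str{A}^\star \in \mathcal{M}^\star_\Monoid$.

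For the converse, Proposition~\ref{prop:cycles}~(\ref{cor:suaer1:1}) makes $\str{G}'$ an $\Monoid$-metric space, so $\str{C} = L^\star(\str{G}')$ lies in $\mathcal{M}^\star_\Monoid$ by construction. I then verify that the natural map fixing every original vertex of $\str{A}$ and sending each ball vertex $b$ of diameter $\Block$ to $\func{C}{\Block}(v)$ for an arbitrary representative $v$ with $\func{A}{\Block}(v) = b$ (such $v$ exists since $\str{A}$ has no orphans) is a one-to-one homomorphism-embedding $\str{A}^- \to \str{C}^-$. The routine checks are: original distances $(u,v) \in \rel{A}{s}$ are preserved because any strictly shorter walk in $\str{G}$ would create a non-$\Monoid$-metric cycle through $(u,v)$; $\func{A}{\Block}(u) = \func{A}{\Block}(v)$ forces $u \sim_\Block v$ in $\str{G}'$ because the added witness triple has $\Monoid$-length in a block $\mleq \Block$; and the block-type relations transfer from $\str{A}$ to $\str{C}$ by combining these with Lemma~\ref{lem:blockdistances}, Observation~\ref{obs:blocktypes}, and the choice of $a(t), b(t)$.

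The main obstacle is the converse half of the ball equivalence, namely showing that $\func{A}{\Block}(u) \neq \func{A}{\Block}(v)$ forces $u \not\sim_\Block v$ in $\str{G}'$. I decompose any walk from $u$ to $v$ in $\str{G}$ into segments separating consecutive original vertices of $\str{A}$ either by a single original edge or by a full witness-triple traversal. If some edge of the walk has distance in a block $\mgt \Block$, monotonicity of $\oplus$ places the total in a block $\mgt \Block$. Otherwise every segment has $\Monoid$-length in a block $\mleq \Block$: a single original edge $z$--$z'$ of distance $s$ in a block $\mleq \Block$ gives $z \sim_\Block z'$ in $\str{A}$ by Lemma~\ref{lem:finite4value2:1}, while a triple traversal $z$--$w_1$--$w_2$--$z'$ with total in a block $\mleq \Block$ has $b \in \Block_{\ell'} \mleq \Block$, so $t(z,z') \subseteq \Block_{\ell'} \mleq \Block$ and hence $z \sim_\Block z'$ in $\str{A}$. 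Chaining these $\Block$-equivalences from $u$ to $v$ would give $\func{A}{\Block}(u) = \func{A}{\Block}(v)$, a contradiction. Hence $d_{\str{G}'}(u,v)$ lies in a block $\mgt \Block$, so $u \not\sim_\Block v$ in $\str{G}'$; combined with the preceding checks, this shows $\str{C}^-$ is a strong completion of $\str{A}^-$ in $\mathcal{M}^\star_\Monoid$.
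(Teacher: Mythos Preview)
Your treatment of the forward direction (``$\str G$ is $\Monoid$-metric $\Rightarrow$ $\str C^-$ is a strong completion of $\str A^-$'') is substantially more detailed than the paper's, which simply asserts that since every $t(u,v)$ is witnessed in $\str G$, the map works. Your explicit verification that $\func{A}{\Block}(u)\neq\func{A}{\Block}(v)$ forces $u\not\sim_\Block v$ in $\str G'$, via the segment-by-segment decomposition of walks, is a genuine contribution that the paper glosses over; this part of your argument is correct and fills a real gap in the paper's exposition.

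However, your contrapositive direction (``$\str G$ non-$\Monoid$-metric $\Rightarrow$ $\str A$ has no completion'') has a genuine gap. The inequality
\[
a^\star \oplus \ell^\star \oplus a^\star \mleq a(t)\oplus b(t)\oplus a(t)
\]
is not a consequence of Lemma~\ref{lem:obstaclemus}: that lemma gives a \emph{dichotomy} (either $e\oplus\mus(\Block,S)\mgeq\ell$ or $e\oplus b\mlt\ell$ for \emph{every} $b\in\Block$), not a pointwise bound of the form $a^\star\mleq a(t)$ or $\ell^\star\mleq b(t)$. In particular, $a^\star$ is an arbitrary element of a block $\mleq\Block$ and may well exceed $a(t)$; likewise $\ell^\star\in t$ need not be bounded by $b(t)$ when $b(t)$ is not the maximum of $t$. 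Your final clause (``the direct distance $d_{\str A^\star}(v_1,v_k)\in t(v_1,v_k)$ dominates the transported walk'') is asserted without proof and does not follow from what precedes it; you also do not handle the case where the long edge of the non-$\Monoid$-metric cycle in $\str G$ is itself a $b(t)$-edge inside an added triple.

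The paper's argument is organised differently and avoids these issues. It takes the completion $\str A'$ and sets $e_i=d_{\str A'}(u_i,v_i)\in t(u_i,v_i)$. It then finds, for each $i$, some $a_i'\in\Block$ making the triangle $a_i',b(t_i),e_i$ metric; adding the paths $a_i',b(t_i),a_i'$ to the (metric) $\str K'$ therefore yields an $\Monoid$-metric graph. The contradiction comes from Lemma~\ref{lem:obstaclemus} applied the \emph{other} way: since each $a(t_i)\mgeq\mus(\Block_i,S')$, the dichotomy forces the non-$\Monoid$-metric inequality witnessed by $\str K$ to persist when each $a(t_i)$ is replaced by \emph{any} element of $\Block_i$, in particular by $a_i'$. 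You should reorganise your argument along these lines rather than attempting a direct comparison of walk lengths.
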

\begin{proof}
First assume that $\str G$ is $\Monoid$-metric. Since $\str G$ is $\Monoid$-metric, by Proposition~\ref{prop:cycles} it follows that $\str G'$ is a strong completion of $\str G$ (note that $\str G$ is connected thanks to putting $t(u,v)$ to be the largest block if otherwise it is undefined). Since, by the construction, every $t(u,v)$ is witnessed in $\str G$, it follows that $\str C^-$ is a strong completion of $\str A^-$ in $\mathcal M^\star_\Monoid$.

It remains to prove that if $\str G$ is non-$\Monoid$-metric then $\str A$ has no completion in $\mathcal M^\star_\Monoid$. For a contradiction assume that $\str A$ has a completion $\str A'\in \mathcal M^\star_\Monoid$ and that $\str G$ is non-$\Monoid$-metric. By Proposition~\ref{prop:cycles} we get that $\str G$ contains a non-$\Monoid$-metric cycle $\str K$. Some of the edges of $\str K$ are present in $\str A$, the rest come from the $a(t),b(t),a(t)$ paths. Let $\str K'$ be what $\str A'$ induces on $K\cap A$ (in other words, the completion of $\str K$ without the added paths). By our assumption $\str K'$ is an $\Monoid$-metric space.

Enumerate the endpoints of the $a(t),b(t),a(t)$ paths in $\str K$ as $u_1,v_1, u_2, \allowbreak v_2,\ldots,u_k,v_k$ and put $e_i = d_{\str A'}(u_i,v_i)$. Clearly, $e_i\in t(u_i,v_i)$. Note that by the definition of block-types and by the choice of $a(t)$ and $b(t)$, for every $1\leq i \leq k$ such that $t(u_i,v_i) = t(\Block,\ell)$ there is $a_i'\in \Block$ such that the triangle with distances $a_i', b(t(u_i,v_i)), e_i$ is $\Monoid$-metric. It follows that if we extend $\str K'$, adding a path $a_i',b(t(u_i,v_i)),a_i'$ connecting $u_i$ and $v_i$ for every $i$, the resulting graph is $\Monoid$-metric. This implies that if in Definition~\ref{defn:starcompletion} we used $a_i'$ instead of $a(t)$, we would obtain an $\Monoid$-metric graph $\str G$. However, this contradicts the choice of $a(t)$'s (see Lemma~\ref{lem:obstaclemus}) and we are done.
\end{proof}

\begin{lemma}\label{lem:finite4value2:2}
Let $\EMonoid$ be a distance monoid with finitely many blocks and let $\str{C}_0$ be a linearly ordered $L^\star_\Monoid$-structure. Then there is $n=n(\Monoid,\str{C}_0)$ such that the following holds: Let $\str{A}$ be an $L^\star_\Monoid$-structure such that there is a homomorphism-embedding $\str A\to\str C_{0}$ and moreover every irreducible substructure of $\str A$ is from $\mathcal M^\star_\Monoid$. If $\str A$ has no completion in $\mathcal M^\star_\Monoid$, then $\str A$ contains a substructure on at most $n$ vertices which has no completion in $\mathcal M^\star_\Monoid$.
\end{lemma}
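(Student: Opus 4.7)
The plan is to convert non-completability of $\str A$ into the existence of an induced non-$\Monoid$-metric cycle in the auxiliary graph $\str G$ of Definition~\ref{defn:starcompletion}, and then apply Proposition~\ref{prop:Sequivalence} to bound the length of any minimal such cycle in terms of the distance set of $\str C_0$.

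First I would set up the reduction. By the moreover clause of Lemma~\ref{lem:noorphans}, we may replace $\str A$ by its orphan-free extension $\str B$, since a bounded non-completable substructure of $\str B$ restricts to a bounded non-completable substructure of $\str A$. Lemma~\ref{lem:starfcompletionworks} then reduces non-completability of $\str B$ to non-$\Monoid$-metricity of the $\Monoid$-graph $\str G$ built in Definition~\ref{defn:starcompletion}, which by Proposition~\ref{prop:cycles}(\ref{cor:suaer1:3}) means $\str G$ contains an induced non-$\Monoid$-metric cycle $\str K$. Crucially, the set $S'$ of distances appearing in $\str G$ is finite and depends only on $\Monoid$ and $\str C_0$: distances in $\str B$ lie in the finite set $S$ of distances of $\str C_0$ (since $\str A\to\str C_0$ is a homomorphism-embedding), and the inserted bridge distances $a(t),b(t)\in S^\oplus$ are fixed once for each block-type $t$.

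The technical heart is bounding the number of vertices of a minimal induced non-$\Monoid$-metric cycle $\str K$ in $\str G$, with edge distances $e_1,\dots,e_k,\ell$ satisfying $\ell\mgt e_1\oplus\cdots\oplus e_k$. Apply Proposition~\ref{prop:Sequivalence} to $S'$ to obtain an important subsequence $(f_i)_{i=1}^m$ of length $m<n(S')$. If $k\ge n(S')$, then $(f_i)$ is a proper subsequence; let $a$ be the largest removed distance and $\Block_a$ its block. By iterated Lemma~\ref{lem:finite4value2:1}, the endpoints of each maximal run of removed edges lie in a common $\Block_a$-ball of $\str B$, so by the construction in Definition~\ref{defn:starcompletion} there is either a direct edge of $\str A$ or an inserted $a(t),b(t),a(t)$-bridge in $\str G$ between them, of total distance in $\Block_a$ or a smaller block. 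Summing these bridge totals across all gaps yields a single $b\in\Block_a$, and property~(3) of Proposition~\ref{prop:Sequivalence} gives $\ell\mgt b\oplus f_1\oplus\cdots\oplus f_m$, producing a strictly shorter non-$\Monoid$-metric cycle in $\str G$ and contradicting minimality of $\str K$.

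The main obstacle is precisely this geometric realization: Proposition~\ref{prop:Sequivalence} supplies a single collective extra distance $b$ for the union of all removed sub-paths, whereas $\str G$ bridges different gaps by separate $3$-edge paths whose distances live in possibly different blocks. The accounting that makes this work is that the $\oplus$-sum of distances each lying in a block $\mleq\Block_a$ remains in $\Block_a$, so Proposition~\ref{prop:Sequivalence}(3) can be applied in aggregate. Once a bound $|\str K|\le 2n(S')+1$ is established, let $\str A''$ be the substructure of $\str A$ induced by the original-vertex endpoints of edges of $\str K$ together with their closures in $\str A$; each closure has size at most $1+|B_\Monoid|$. The graph $\str G''$ arising from $\str A''$ via Definition~\ref{defn:starcompletion} contains the same non-$\Monoid$-metric cycle (up to the choice of bridge distances, which lie in the same blocks), so by Lemma~\ref{lem:starfcompletionworks} $\str A''$ has no completion in $\mathcal{M}^\star_\Monoid$. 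Taking $n(\Monoid,\str C_0)=(2n(S')+1)\cdot 2(1+|B_\Monoid|)$ finishes the proof.
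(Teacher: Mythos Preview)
Your reduction to the auxiliary graph $\str G$, the extraction of a non-$\Monoid$-metric cycle $\str K$, and the appeal to Proposition~\ref{prop:Sequivalence} all match the paper. The divergence --- and the gap --- is in how you pass from the important subsequence $(f_i)$ to a bounded obstruction.

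The claim in your second paragraph that ``by the construction in Definition~\ref{defn:starcompletion} there is either a direct edge of $\str A$ or an inserted $a(t),b(t),a(t)$-bridge in $\str G$ between them'' is not justified. Step~(\ref{starfcompletion:distances}) of Definition~\ref{defn:starcompletion} inserts a bridge between $u$ and $v$ \emph{only when $t(u,v)$ is not witnessed}. If $t(u,v)$ happens to be witnessed by some other pair $u',v'$ in the same ball, no bridge is added and there is in general no short $\str G$-path from $u$ to $v$ at all; the witness $u',v'$ need not be connected to $u,v$ in $\str G$ either. Hence your minimality-of-$\str K$ contradiction does not go through, and the bound $|\str K|\le 2n(S')+1$ is unsupported. (There is also the secondary issue that the endpoints of a maximal run of removed edges can be interior bridge vertices, to which Lemma~\ref{lem:finite4value2:1} does not apply; this needs to be handled by grouping each full bridge as a single segment.)

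The paper avoids this by \emph{not} trying to shorten the cycle inside $\str G$. Instead it lets $D'$ be the set of original vertices attached to important edges (for a bridge edge $f_i$ one records the original endpoints $u,v$ of the whole bridge), and takes $\str D$ to be the closure of $D'$ in the $L^\star_\Monoid$-structure. The point is that for each unimportant gap $\str P$ between consecutive $D'$-vertices, Lemma~\ref{lem:finite4value2:1} (together with the $R^{t(\Block,\ell)}$ relations on ball vertices for bridges in $\str P$) yields $\func{}{\Block_a}(u)=\func{}{\Block_a}(v)$ \emph{inside $\str D$}. This single equality of function values forces $d(u,v)$ to lie in a block $\mleq\Block_a$ in \emph{every} completion of $\str D$, with no need for any path in $\str G$. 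Proposition~\ref{prop:Sequivalence}(3) then rules out all completions of $\str D$, and $|\str D|$ is bounded by (number of important edges)$\times$(closure size), which depends only on $\str C_0$ and $\Monoid$. The moral: the function symbols $\func{}{\Block}$ were introduced precisely so that ball-membership is recorded locally, replacing the long $\str G$-paths you were looking for.
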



\begin{proof}
Note that $\str A$ having a homomorphism-embedding to $\str C_0$ implies that there is a linear extension $\leq_0$ of $\leq_\str A$. Using Lemma~\ref{lem:noorphans} we can thus assume that $\str A$ contains no orphans and therefore it satisfies the conditions of Definition~\ref{defn:starcompletion}. By Lemma~\ref{lem:ordercompletion} we can also ignore the order.

We will use Definition~\ref{defn:starcompletion} together with Lemma~\ref{lem:starfcompletionworks} to study $\str G$ instead of $\str A$, which will allow us to use Proposition~\ref{prop:Sequivalence}. Also note that the sets $S$ and $T$ defined in part~\ref{starfcompletion:S} of Definition~\ref{defn:starcompletion} only depend on $\str C_0$ (because every distance and every block-type which appears in $\str A$ must also appear in $\str C_0$).

Put $$n = 2(m+1)n(S),$$
where $m$ is the number of blocks of $\Monoid$ and $n(S)$ is given by Proposition~\ref{prop:Sequivalence}.


We will find a substructure $\str D$ of $\str A$ from Definition~\ref{defn:starcompletion} on at most $n$ vertcies which has no completion in $\mathcal M^\star_\Monoid$. By Lemma~\ref{lem:starfcompletionworks} we know that that $\str{G}$ is non-$\Monoid$-metric and thus, using Proposition~\ref{prop:cycles} we get a non-$\Monoid$-metric cycle $\str{K}$ in $\str{G}$. We can enumerate the edges in $\str{K}$ as $\ell, e_1, \ldots, e_k$ such that $\ell\mgt e_1\oplus e_2\oplus \cdots \oplus e_k$. Consider the family $(f_i)$ of important edges in $\str{K}$ given by Proposition~\ref{prop:Sequivalence}. 

Let $D'$ be the subset of vertices of $\str K$ such that if $f_i$ is $b(t(u,v))$ or $a(t(u,v))$ added in step~\ref{starfcompletion:distances} we put $u,v\in D'$ and if $f_i$ is an edge of $\str A$ we put its endpoints to $\str K$. Let $\str D$ be the closure of $D'$ in $\str B$ and let $\str P$ be a subpath of $\str K$ such that it has at least 3 vertices, its endpoints are in $D'$ and all its interior points are not in $D'$ (so in particular $\str P$ consists of unimportant edges). Since edges from the largest block are all important, it follows that the $\Monoid$-length of $\str P$ lies in a non-maximal block $\Block$ of $\Monoid$ and thus by Lemma~\ref{lem:finite4value2:1} there is a ball vertex $b$ such that for every $v\in \str P$ we have $\func{B}{\Block}(v) = b$. In other words, if there is a completion of $\str D$ in $\mathcal M^\star_\Monoid$ then the distance of the endpoints of $\str P$ will lie in block $\Block$. By Proposition~\ref{prop:Sequivalence} it follows that $\str D$ has no completion in $\mathcal M^\star_\Monoid$.

\end{proof}

\begin{lemma}
\label{lem:finite4value2}
Let $\EMonoid$ be a distance monoid with finitely many blocks.  Then 
$\mathcal M^\star_\Monoid$  is a Ramsey class.
\end{lemma}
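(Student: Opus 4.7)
The plan is to apply Theorem~\ref{thm:mainstrongclosures} to the class $\mathcal M^\star_\Monoid$ with $\mathcal R$ being the class of all finite linearly ordered irreducible $L^\star_\Monoid$-structures. This $\mathcal R$ is Ramsey by Theorem~\ref{thm:NRclosures} (applied to the free amalgamation class of all finite irreducible $L^\star_\Monoid$-structures); and every structure in $\mathcal M^\star_\Monoid$ is irreducible, because any two vertices are either related by one of the distance relations $\rel{}{s}$ or block-type relations $\rel{}{t}$, or linked via one of the functions $\func{}{\Block}$ or $\func{}{\Block,\Block'}$. Hence $\mathcal M^\star_\Monoid\subseteq\mathcal R$, and it suffices to verify the three conditions of Definition~\ref{def:multiamalgamation}.

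The hereditary property is immediate from the very definition of $\mathcal M^\star_\Monoid$ as the class of all substructures of structures $L^\star(\str A)$, $\str A\in \vv{\mathcal M}_\Monoid$. For the strong amalgamation property, given $\str A, \str B_1, \str B_2\in \mathcal M^\star_\Monoid$ with embeddings $\alpha_i:\str A\to \str B_i$, I would first invoke Lemma~\ref{lem:noorphans} to reduce to the case that all three structures contain no orphans. Restricted to the original vertices, the $\str B_i$ are convexly ordered $\Monoid$-metric spaces, and Proposition~\ref{prop:cycles}(\ref{cor:suaer1:4}) provides their strong (shortest-path) amalgamation over $\alpha_i(\str A)$. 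Ball vertices originating in the common image $\alpha_i(\str A)$ are identified, while ball vertices from $\str B_i\setminus \alpha_i(\str A)$ stay distinct, preserving strongness. The $\rel{}{t}$-relations between ball vertices in the amalgam are forced by Lemma~\ref{lem:blockdistances} from the distances between original vertices, the $\func{}{\Block,\Block'}$-values follow by consistency of block collapses, and Lemma~\ref{lem:ordercompletion} supplies a convex linear extension of the two given orders, yielding a structure in $\mathcal M^\star_\Monoid$.

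The locally finite completion property is essentially repackaging Lemma~\ref{lem:finite4value2:2}. Given $\str B\in \mathcal M^\star_\Monoid$ and $\str C_0\in\mathcal R$, I set $n=n(\Monoid,\str C_0)$ as supplied by that lemma. If $\str C$ has $\str C_0$ as a completion, every irreducible substructure of $\str C$ lies in $\mathcal M^\star_\Monoid$, and every substructure of $\str C$ on at most $n$ vertices has an $\mathcal M^\star_\Monoid$-completion, then the contrapositive of Lemma~\ref{lem:finite4value2:2} shows that $\str C$ itself admits an $\mathcal M^\star_\Monoid$-completion. Lemma~\ref{lem:starfcompletionworks} guarantees that the explicit construction of Definition~\ref{defn:starcompletion} produces in fact a \emph{strong} completion, which is automatically a completion with respect to copies of $\str B$: since $\str B$ is irreducible, any homomorphism-embedding restricts to an embedding on each copy.

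The real obstacle has already been absorbed into Lemma~\ref{lem:finite4value2:2} — namely, that failure of completion in $\mathcal M^\star_\Monoid$ is always witnessed by a substructure whose size is bounded in terms of the distances and block-types occurring in $\str C_0$; this rests on the finite analysis of ``important'' summands in Proposition~\ref{prop:Sequivalence} together with the block-collapse information supplied by Lemma~\ref{lem:finite4value2:1}. What remains is entirely routine amalgamation bookkeeping around ball vertices and convex orders, handled by the auxiliary lemmas \ref{lem:noorphans}, \ref{lem:ordercompletion}, and \ref{lem:blockdistances}.
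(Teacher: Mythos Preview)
Your proposal is correct and follows essentially the same approach as the paper: verify the three conditions of Definition~\ref{def:multiamalgamation} for $\mathcal M^\star_\Monoid$ (with $\mathcal R$ the class of all finite linearly ordered $L^\star_\Monoid$-structures, Ramsey by Theorem~\ref{thm:NRclosures}), the locally finite completion property being supplied by the contrapositive of Lemma~\ref{lem:finite4value2:2}. The only substantive difference is in the strong amalgamation step: you propose to amalgamate the underlying metric spaces directly via Proposition~\ref{prop:cycles}(\ref{cor:suaer1:4}) and then rebuild the ball-vertex layer by hand, whereas the paper reuses the completion procedure of Definition~\ref{defn:starcompletion} and Lemma~\ref{lem:starfcompletionworks} uniformly, reducing to the observation that any obstruction would be a non-$\Monoid$-metric cycle on at most four vertices, which the $\rel{}{t(\Block,\ell)}$ relations on the free amalgam rule out. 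Both arguments work; yours just needs the (true but unstated) check that distinct ball vertices from opposite factors cannot collapse after shortest-path completion, which holds because any path between their original-vertex witnesses must cross $\str A$ via an edge in a strictly larger block. One small slip: when you invoke Theorem~\ref{thm:NRclosures}, the underlying free amalgamation class must be \emph{all} finite unordered $L^\star_\Monoid$-structures, not just the irreducible ones---irreducible structures do not form a free amalgamation class, though since every linearly ordered structure is automatically irreducible, your $\mathcal R$ ends up coinciding with the paper's $\mathcal R_\Monoid$.
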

\begin{proof}
We first show that $\mathcal M^\star_\Monoid$ is an $\mathcal R_\Monoid$-multiamalgamation class where
the class $\mathcal R_\Monoid$ consists of all finite $L^\star_\Monoid$-structures (which is Ramsey by Theorem~\ref{thm:NRclosures}).

Clearly $\mathcal M^\star_\Monoid$ is hereditary. It is also a strong amalgamation class: To see that it is enough to show that free amalgamations of structures from $\mathcal M^\star_\Monoid$ have a completion in $\mathcal M^\star_\Monoid$. Let $\str A,\str B_1,\str B_2\in \mathcal M^\star_\Monoid$ be structures such that $\str A\subseteq \str B_1,\str B_2$ and consider the amalgamation problem for $\str B_1$ and $\str B_2$ over $\str A$. Using Lemma~\ref{lem:ordercompletion} we know that it is enough to discuss completion of the unordered reduct. We can also assume that $\str B_1$ and $\str B_2$ have no orphans (and thus the free amalgamation $\str C$ of $\str B_1$ and $\str B_2$ over $\str A$ also has no orphans).

Using Lemma~\ref{lem:starfcompletionworks} for $\str C$ we reduce the question whether $\str C$ has a completion in $\mathcal M^\star_\Monoid$ to the question whether the corresponding $\Monoid$-graph $\str G$ is $\Monoid$-metric. If $\str G$ is non-$\Monoid$-metric, and we take $\str G$ with smallest number of vertices, it follows that it has at most 4 vertices and it is easy to verify that the $\rel{}{t(\Block,\ell)}$ relations prevent this from happening in free amalgamations. Hence $\mathcal M^\star_\Monoid$ is a strong amalgamation class.

The locally finite completion property (see Definition~\ref{def:multiamalgamation}) follows by Lemma~\ref{lem:finite4value2:2}: Fix $\str B\in \mathcal M^\star_\Monoid$ and $\str C_0\in \mathcal R_\Monoid$ and let $\str C$ be an $L^\star_\Monoid$-structure such that there is a homomorphism-embedding $\str C\to\str C_0$ and every substructure of $\str C$ on at most $n=n(\Monoid,\str{C}_0)$ (given by Lemma~\ref{lem:finite4value2:2}) has a completion in $\mathcal M^\star_\Monoid$.  We want to show that $\str C$ has a completion in $\mathcal M^\star_\Monoid$ with respect to copies of $\str B$.

Without loss of generality we can assume that every vertex and every tuple in every relation and function in $\str C$ lies in a copy of $\str B$ (because otherwise we can simply remove it, not changing the existence of a completion with respect to copies of $\str B$). This, however, implies that in particular every irreducible substructure of $\str C$ is from $\mathcal M^\star_\Monoid$. Hence the conditions of Lemma~\ref{lem:finite4value2:2} are satisfied for $\str C$, which implies that if $\str C$ has no completion if $\mathcal M^\star_\Monoid$ then it contains a substructure on at most $n$ vertices with no completion in $\mathcal M^\star_\Monoid$. Consequently, $\str C$ has a completion in $\mathcal M^\star_\Monoid$ and we are done.
\end{proof}

Now we can prove the main theorem of this paper:
\begin{proof}[Proof of Theorem~\ref{thm:main}]
To show that $\vv{\mathcal M}_\Monoid$ is Ramsey consider $\str{A},\str{B}\in \vv{\mathcal M}_\Monoid$. We show that there exists $\str{C}\longrightarrow(\str{B})^\str{A}_2$.  If $\Monoid$ has only finitely many blocks, then by Lemma~\ref{lem:finite4value2} there exists $\str C'\in \mathcal M^\star_\Monoid$ such that
$\str{C}'\longrightarrow(L^\star(\str{B}))^{L^\star(\str{A})}_2$. In fact, we can pick $\str C'$ such that there is $\str C\in \vv{\mathcal M}_\Monoid$ and $\str C' = L^\star(\str C')$ (by the moreover part of Lemma~\ref{lem:starfcompletionworks} and by Lemma~\ref{lem:ordercompletion}). Because the functor $L^\star$ preserves substructures, it follows that $\str C\longrightarrow (\str B)^\str A_2$.

Now let $\Monoid$ be an arbitrary distance monoid and let $\str{B} \in \vv{\mathcal M}_\Monoid$. We will observe that $\str{B}$ lies in an amalgamation subclass of $\vv{\mathcal M}_\Monoid$ which happens to be Ramsey. Therefore for every $\str{A}$ which is a substructure of $\str{B}$ and every integer $k$, we get a $\str{C}$ from the subclass (and hence from $\vv{\mathcal M}_\Monoid$) such that $\str{C} \rightarrow \left(\str{B}\right)^\str{A}_k$, thus proving the Ramsey property for $\vv{\mathcal M}_\Monoid$.

And finding the subclass is actually easy. Let $S$ be the set of all distances which occur in $\str{B}$. Clearly $S$ is finite. Let $\langle S\rangle$ be the submonoid of $\Monoid$ generated by $S$ (containing every finite sum of elements from $S$, hence $\langle S \rangle = S^\oplus\cup \{0\}$ as defined in Section~\ref{subsec:nonimportant}). It is easy to check that if we denote by $\oplus'$ and $\mleq'$ the restriction of $\oplus$ and $\mleq$ respectively to $\langle S\rangle$, then $(\langle S\rangle, \oplus', \mleq', 0)$ is a distance monoid with finitely many blocks and hence $\str{B} \in \vv{\mathcal M}_{\langle S\rangle}$. From this point we can proceed by application of Lemma~\ref{lem:finite4value2} as in the first case.
\end{proof}

\section{Conclusion}
\label{sec:remarks}

The techniques introduced in this paper can be generalised and used to prove the
Ramsey property for much wider family of $\Monoid$-valued metric spaces that
contains everything discussed in this paper as well as for example
$\Lambda$-ultrametric spaces, where $\Lambda$ is a finite distributive lattice.
(These spaces were introduced and their Ramsey expansions were found by
Braunfeld~\cite{Sam}.) We can also prove EPPA for all those classes (hence
extending the results and answering a question of Conant~\cite{Conant2015}).
This can be done by combining the Herwig-Lascar theorem~\cite{herwig2000} with 
additional unary functions as done in~\cite{Evans3}.
Finally, the shortest path completion can be utilised to obtain a Stationary
Independence Relation for the $\Monoid$-valued metric spaces~\cite{Tent2013}.  Our work was
also motivated by the analysis of Cherlin's catalogue of metrically homogeneous
graphs~\cite{Cherlin2013} done in~\cite{Aranda2017,Aranda2017a,Aranda2017c,Konecny2018bc} which also
gives rise to monoid-valued metric spaces, however this correspondence is
more subtle (the monoids do not satisfy the notion of distance monoid used in
this paper).

These results will appear in~\cite{Hubicka2017sauer}, see also~\cite{Konecny2018b}.

\subsection{Acknowledgment}
We would like to thank the anonymous referee for remarks that improved
the quality of this paper.

\bibliographystyle{amsplain}
\bibliography{ramsey.bib}
\end{document}